\renewcommand*\env@matrix[1][*\c@MaxMatrixCols c]{%
  \hskip -\arraycolsep
  \let\@ifnextchar\new@ifnextchar
  \array{#1}}
\newtheorem{corollary}{Corollary}[section]
\newtheorem{proposition}{Proposition}[section]
\newtheorem{theorem}{Theorem}[section]
\newtheorem{lemma}{Lemma}[section]
\theoremstyle{definition}
\newtheorem{definition}{Definition}[section]
\newtheorem{remark}{Remark}[section]
\newtheorem{example}{Example}[section]
\address[Nathan Chapelier-Laget]{Universit\'e du Qu\'ebec \`a Montr\'eal\\
LaCIM et D\'epartement de Math\'ematiques\\ CP 8888 Succ. Centre-Ville\\
Montr\'eal, Qu\'ebec, H3C 3P8\\ Canada}
\email{nathan.chapelier@gmail.com}
\urladdr{https://www.nathanchapelier.fr/home}
\title[A symmetric group action on $H^0(\widehat{X}_{W(\widetilde{A}_n)})$]
 {A symmetric group action on the irreducible components of the Shi variety associated to $W(\widetilde{A}_n)$}
\author{Nathan Chapelier-Laget}
\begin{document}

\maketitle

\begin{abstract}
Let $W_a$ be an affine Weyl group with corresponding finite root system $\Phi$. In \cite{JYS1} Jian-Yi Shi characterized each element $w \in W_a$ by a $ \Phi^+$-tuple of integers $(k(w,\alpha))_{\alpha \in \Phi^+}$ subject to certain conditions. In \cite{NC1} a new interpretation of the coefficients $k(w,\alpha)$ is given. This description led us to define an affine variety $\widehat{X}_{W_a}$, called the Shi variety of $W_a$, whose integral points are in bijection with $W_a$.  It turns out that this variety has more than one irreducible component, and the set of these components, denoted $H^0(\widehat{X}_{W_a})$, admits many interesting properties. In particular the group $W_a$ acts on it. In this article we show that the set of irreducible components of  $\widehat{X}_{W(\widetilde{A}_n)}$ is in bijection with the conjugacy class of $(1~2~\cdots~n+1) \in W(A_n) = S_{n+1}$. We also compute the action of $W(A_n)$ on $H^0(\widehat{X}_{W(\widetilde{A}_n)})$.
\end{abstract}

{
  \hypersetup{linkcolor=blue}
  \tableofcontents
}

\section{Introduction}

\subsection{General background on Weyl groups}

Let $V$ be a Euclidean space with inner product $( -, -)$ and denote $||x|| = \sqrt{(x,x)}$. Let $\Phi$ be an irreducible crystallographic root system in $V$ with simple system $\Delta =  \{\alpha_1,\dots, \alpha_n\}$. Let $m=|\Phi^+|$.  From now on, when we will say \say{root system} it will always mean irreducible crystallographic root system. 

\medskip

Let $W$ be the \emph{Weyl group} associated to $\mathbb{Z}\Phi$, that is the maximal (for inclusion) reflection subgroup of the orthogonal group  $O(V)$ admitting $\mathbb{Z}\Phi$ as a $W$-equivariant lattice.  For $\alpha \in \Phi$ we denote by $s_{\alpha}$ the linear reflection of $V$ defined as follows:
$$
\begin{array}{ccccc}
s_{\alpha}  & : & V & \longrightarrow & V \\
                 &   & x & \longmapsto     & x-2\frac{( \alpha, x )}{( \alpha, \alpha )}\alpha.
\end{array}
$$ 

We denote $s_i := s_{\alpha_i}$ for $i=1,\dots, n$ and $S = \{s_{\alpha}~|~\alpha \in \Delta\}$ so that $(W,S)$ is a Coxeter system of rank $n$ (see \cite{Brenti2005}, \cite{Bour68} or \cite{Kane2001} for good references on the subjet).

We identify $\mathbb{Z}\Phi$ and the group of its associated translations and we denote by $\tau_x$ the translation corresponding to $x \in \mathbb{Z}\Phi$. 

\newpage

Let $k \in \mathbb{Z}$ and $\alpha \in \Phi$. Define the affine reflection $s_{\alpha,k}$ as follows:

$$
\begin{array}{ccccc}
s_{\alpha,k}  & : & V & \longrightarrow & V \\
                 &   & x & \longmapsto     & x-(2\frac{( \alpha, x )}{( \alpha, \alpha )}-k)\alpha.
\end{array}
$$ 

We consider the subgroup $W_a $ of Aff($V$) generated by all affine reflections $s_{\alpha,k}$ with $\alpha \in \Phi$ and $k \in \mathbb{Z}$, that is 
$$
W_a = \langle s_{\alpha,k}~|~\alpha \in \Phi, ~k \in \mathbb{Z \rangle}.
$$
 The group $W_a$ is called the \emph{affine Weyl group} associated to $\Phi$.  It is also well known \cite[Ch.III, Section 11]{Kane2001} that $W_a = \mathbb{Z}\Phi \rtimes W$. Therefore, any element $w \in W_a$ decomposes as $w=\tau_x\overline{w}$ where $x \in \mathbb{Z}\Phi$ and $\overline{w} \in W$. The element $\overline{w}$  is called the \emph{finite part} of $w$.
 
The classification of irreducible crystallographic root systems states  that there are at most two possible root lengths in $\Phi$. We call short root the shorter ones.  

 Let $\alpha \in \Phi$ such that  $\alpha = a_1\alpha_1 + \cdots + a_n\alpha_n$ with $a_i \in \mathbb{Z}$. The height of $\alpha$ (with respect to $\Delta$) is defined by the number $h(\alpha) = a_1 + \cdots+ a_n$.  Height provides a preorder on $\Phi^+$ defined by $\alpha \leq \beta$ if and only if $h(\alpha) \leq h(\beta)$. 
 
 We denote by $\alpha_0$ the \emph{highest short root} of $\Phi$.
 Then $(W_a, S_a)$ is a Coxeter system (see for example \cite[Ch.4]{Hu90} or \cite{JYS1}).
 
 \medskip
 
 The \emph{inversion set} of $w \in W$ is by definition the set $$N(w) := \{ \xi \in \Phi^+~|~w^{-1}(\xi) \in \Phi^- \}.$$
It is a well known fact that if $w=uv$ is a reduced expression of $w$ (that is $\ell(w) = \ell(u) + \ell(v)$) then the inversion set of $w$ decomposes as \cite[Proposition 2.1]{SRLE}
\begin{equation}\label{N}
 N(w) = N(u) \sqcup u(N(v)).
\end{equation}

Let $\alpha \in \Phi$ and $\alpha^{\vee}:= \frac{2\alpha}{( \alpha, \alpha )}$. For any $k \in \mathbb{Z}$ and any $m \in \mathbb{R}$, we set the hyperplanes 
\begin{align*}
H_{\alpha,k} &= \{x \in V~|~s_{\alpha,k}(x)=x \} \\
&= \{ x \in V~|~ ( x, \alpha^{\vee} ) = k\},
\end{align*}
the half spaces 
$$
H_{\alpha,k}^{^{+}} = \{ x \in V|~ k < ( x,\alpha^{\vee} ) \} 
$$
and
$$
H_{\alpha,k}^{^{-}} = \{ x \in V|~ ( x,\alpha^{\vee}) < k \},
$$

\noindent and the strip
\begin{align*}
H_{\alpha,k}^1 & = \{x \in V~|~k < ( x ,\alpha^{\vee} ) < k+1 \}  \\ &= H_{\alpha,k}^{^{+}} \cap H_{\alpha,k+1}^{^{-}}.
\end{align*}

The connected components of 
$$
 V ~\backslash \bigcup\limits_{\tiny{\begin{subarray}{c}
 ~ ~\alpha \in \Phi^{+} \\ 
  k \in \mathbb{Z}
\end{subarray}}}
H_{\alpha,k} 
$$
are called \emph{alcoves}. We denote $A_e$ the alcove defined as $A_e = \bigcap_{\alpha \in \Phi^+} H_{\alpha,0}^1$. It is well known that $W_a$ acts regularly on the set of alcoves \cite[Ch. 4]{Hu90}. It follows that there is a bijective correspondence between the elements of $W_a$ and all the alcoves. This bijection is defined by $w \mapsto A_w$ where $A_w := wA_e$. We call $A_w$ the corresponding alcove associated to $w \in W_a$. Any alcove of $V$ can be written as an intersection of width-one strips, that is there exists a $\Phi^+$-tuple of integers $(k(w,\alpha))_{\alpha \in \Phi^+}$ such that 
$$
A_w = \bigcap\limits_{\alpha \in \Phi^+}H_{\alpha, k(w,\alpha)}^1.
$$

 In \cite{JYS1} Jian-Yi Shi characterized the elements of any affine Weyl  group $w \in W_a$ by the $ \Phi^+$-tuple of integers $(k(w,\alpha))_{\alpha \in \Phi^+}$ subject to certain conditions.  This characterization is given by the following theorem.
 
\begin{theorem}[\cite{JYS1}, Theorem 5.2]\label{thJYS1} 
Let $A = \bigcap\limits_{\alpha \in \Phi^+} H^1_{\alpha,k_{\alpha}}$ with $k_{\alpha} \in \mathbb{Z}$. Then $A$ is an alcove, if and only if, for all $\alpha$, $\beta \in \Phi^+$ satisfying  $\alpha + \beta \in \Phi^+$, we have the following inequality
\begin{equation}\label{Shi ineq}
||\alpha||^2k_{\alpha} + ||\beta||^2k_{\beta} +1 \leq ||\alpha + \beta||^{2}(k_{\alpha+\beta} +1) \leq ||\alpha||^2k_{\alpha} + ||\beta||^2k_{\beta} + ||\alpha||^2+ ||\beta||^2 + ||\alpha+\beta||^2 -1.
\end{equation}
\end{theorem}

\medskip
 
 Let $P_{\mathcal{H}}$ be the polytope: 
 $$
 P_{\mathcal{H}}:= \bigcap\limits_{\alpha \in \Delta}H_{\alpha,0}^1,
 $$ 
 and let $A_w \subset P_{\mathcal{H}}$. It is clear that $k(w,\alpha) = 0$ for all $\alpha \in \Delta$, and reciprocally, if $w' \in W_a$ is such that $k(w', \alpha) = 0$ for all $\alpha \in \Delta$ then $A_{w'} \subset P_{\mathcal{H}}$. The elements of this polytope seen as $\Phi^+$-tuple of integers are called \emph{admitted}, and more precisely a vector $\lambda \in \bigoplus\limits_{\alpha \in \Phi^+}\mathbb{Z}\alpha$ is admitted if and only if there exists $w \in W_a$ such that $k(w,\alpha) = \lambda_{\alpha}$ for all $\alpha \in \Phi^+$ and such that $A_w \subset P_{\mathcal{H}}$ (see \cite[Section 4.3]{NC1} for a more detailed explication of admitted vectors).

\medskip

\begin{example}
In type $A_n$, an admitted vector $\lambda = (\lambda_{i,j})_{1 \leq i<j\leq n+1}$ is defined by the following conditions:
\begin{equation}\label{condition adm}
\left\{
\begin{array}{ll}
\lambda_{i,j}+\lambda_{j,k} \leq \lambda_{i,k} \leq \lambda_{i,j}+\lambda_{j,k}+1~~\text{~~for all}~i<j<k, \\
\lambda_{i,i+1}=0~~\text{~~for all}~1\leq i<n.
\end{array}
\right.
\end{equation}
\end{example}

\medskip

\subsection{The Shi variety in type $A$}
 In \cite{NC1} the author defines an affine variety $\widehat{X}_{W_a}$, called \emph{the Shi variety} of $W_a$, whose integral points $\widehat{X}_{W_a}(\mathbb{Z})$ are in bijection with $W_a$ \cite[Theorem 4.3]{NC1}.  
 
 The construction of the variety in full generality is not needed in this article,  except in type $A$ where we use in Lemma \ref{stab diamond} the nature of the equations that define $\widehat{X}_{W(\widetilde{A}_n)}$. 
 We briefly recall the construction in type $A_n$ and we refer the reader to  \cite[Section 4]{NC1} for the general construction. 
 
 First,  we can realize a root system of type $ \Phi = A_n$ as follows: Set  $\{e_1,\cdots,e_{n+1}\}$ the canonical basis of $\mathbb{R}^{n+1}$.  Then $\Phi :=\{\pm( e_i-e_j)~|~ 1 \leq i < j \leq n+1 \}$ with $\Phi^+ = \{ e_i-e_j~|~ 1 \leq i < j \leq n+1 \}$ and with simple system $\Delta = \{ e_i-e_{i+1}~|~ 1 \leq i  \leq n \}$.  Let us write for short $k(w,e_i - e_j) = k_{i,j}(w)$.

 Second, from Theorem \ref{thJYS1} we know that each element $w \in W(\widetilde{A}_n)$ is characterized by a $\Phi^+$-tuple $(k_{i,j}(w))_{e_i-e_j\in \Phi^+}$ satisfying a system of inequalities given by (\ref{Shi ineq}).  These inequalities become in type $A_n$ as follows:
 \begin{equation}
k_{i,j}(w)+k_{j,\ell}(w) \leq k_{i,\ell} (w)\leq k_{i,j}(w)+k_{j,\ell}(w)+1~~\text{~~for all~~}~ 1\leq i<j<\ell \leq n+1.
\end{equation}

 In particular we see that:
   $$
   k_{i,\ell}(w) =  k_{i,j}(w)+k_{j,\ell}(w) \text{~}\text{~} \text{~~or~~}  \text{~}\text{~}    k_{i,\ell}(w) =  k_{i,j}(w)+k_{j,\ell}(w) +1.
   $$ 

Iterating this process for any root $\alpha =e_i-e_j \in \Phi^+$, we can express $k_{\alpha}(w)$ in terms of the $k_{\delta}(w)$'s with $\delta \in \Delta$ as follows:
$$
k_{i,j}(w) = k_{i,i+1}(w) + k_{i+1,i+2}(w) + \dots k_{j-1,j}(w) + \lambda_{i,j}(w)
$$
 where $\lambda_{i,j}(w)$ is a positive integer that belongs to $\llbracket 0,  h(e_i-e_j) - 1 \rrbracket$, that is to $\llbracket 0,  j-i- 1 \rrbracket$.

 Setting $X_{i,j}$ to be the formal variable corresponding to the positive root $e_i-e_j$,  the Shi variety $\widehat{X}_{W(\widetilde{A}_n)}$ is defined as the set of solutions in $\mathbb{R}^{n(n+1)/2}$ of the equations:
 \begin{align}\label{equations var A}
  X_{i,i+1} + X_{i+1,i+2} + \dots X_{j-1,j} + \lambda_{i,j} -  X_{i,j} = 0
 \end{align}

 with $1 \leq i <j \leq n+1$ and some particular $\lambda_{i,j} \in \llbracket 0,  j-i- 1 \rrbracket$.  It turns out that the $\lambda_{i,j}$'s appearing in the equations (\ref{equations var A}) are precisely the coefficients of admitted vectors \cite[Section 4.3]{NC1}.
 
\subsection{The irreducible components of the Shi variety}
 It turns out that the irreducible components of this variety are in bijection with the alcoves of the polytope $P_{\mathcal{H}}$ \cite[Proposition 4.1]{NC1}. Writing $H^0(\widehat{X}_{W_a})$ to be the set of irreducible components of  $\widehat{X}_{W_a}$, we have
  $$
  H^0(\widehat{X}_{W_a})= \{\widehat{X}_{W_a}[\lambda]~|~\lambda ~\text{admitted} \}.
  $$
   From the above parameterization and the fact that the components have no intersection we have the following decomposition of $\widehat{X}_{W_a}$ \cite[Theorem 4.3]{NC1}:
$$
 \widehat{X}_{W_a} = \bigsqcup\limits_{\lambda~\text{admitted}} X_{W_a}[\lambda].
 $$
The admitted vectors $\lambda$ are built, in type $A_n$ for example, via the coefficients $\lambda_{i,j}$ appearing in the equations (\ref{equations var A}).
\subsection{The $\Phi^+$-representation}

Let $s_{\alpha, p} \in W_a$.   In \cite{NC1} we defined the affine map $F(s_{\alpha,p})$ as 
 $$
 F(s_{\alpha,p})(x) := L_{\alpha}(x) + v_{p,\alpha}
 $$
  for all $x \in \bigoplus\limits_{\beta \in \Phi^+}\mathbb{R}\beta$,  with $L_{\alpha} \in GL_n(\mathbb{R})$  defined via the matrix $(\ell_{i,j}(\alpha))_{i,j \in \llbracket 1,m \rrbracket}$ where
\begin{equation}\label{matrix F}
 \ell_{\alpha_j, \alpha_i}(\alpha) :=\ell_{j,i}(\alpha) = 
 \left\{
 \begin{array}{rl}
     1  &  \text{if} ~~ s_{\alpha}(\alpha_i) = \alpha_j															\\
    0  &  \text{if} ~~ s_{\alpha}(\alpha_i) \neq \pm \alpha_j												\\
   -1 &  \text{if} ~~ s_{\alpha}(\alpha_i) = -\alpha_j  ,  													\\
 \end{array}
\right.
\end{equation}
 and with $v_{p,\alpha} \in \bigoplus\limits_{\beta \in \Phi^+}\mathbb{R}\beta$ the vector defined by $v_{p,\alpha}=(v_{p,\alpha}(\gamma))_{\gamma \in \Phi^+}$ where 
 \begin{equation}\label{coeff affine F}
v_{p,\alpha}(\gamma) := 
 \left\{
 \begin{array}{rl}
    -p(\alpha , s_{\alpha}(\gamma)^{\vee}) &  \text{if} ~~ s_{\alpha}(\gamma) \in \Phi^+  					\\
        -1-p(  \alpha, s_{\alpha}(\gamma)^{\vee} ) &  \text{if} ~~ s_{\alpha}(\gamma) \in \Phi^-.    			\\
 \end{array}
\right.
 \end{equation}

 \medskip

For $w \in W_a$ we denote $L_w $ to be the left multiplication by $w$. In \cite{NC1} we showed that $F$ extends naturally to $W_a$. We also showed that $F$ induces a geometrical action on the irreducible components.

\begin{theorem}[\cite{NC1}, Theorem 3.1] \label{th Phi rep}
 There exists an injective morphism ${F : W_a \rightarrow Isom(\mathbb{R}^m)}$ such that for any $w \in W_a $ the following diagram commutes. This morphism is called the $\Phi^+$-representation of $W_a$, and the  corresponding action is called the $\Phi^+$-action of $W_a$.
$$
  \xymatrix{
 W_a \ar[r]^{L_w} \ar@{^{(}->}[d]_{\iota} &  W_a \ar@{^{(}->}[d]_{\iota} \\
    \mathbb{R}^m \ar[r]_{F(w)}                       & \mathbb{R}^m.
  }
$$
\end{theorem}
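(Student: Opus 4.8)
The plan is to realize the vertical embedding $\iota\colon W_a \hookrightarrow \mathbb{R}^m$ as the Shi-coordinate map $w \mapsto (k(w,\alpha))_{\alpha\in\Phi^+}$, which is injective because distinct elements of $W_a$ index distinct alcoves and the tuple $(k(w,\alpha))_\alpha$ recovers $A_w = \bigcap_\alpha H^1_{\alpha,k(w,\alpha)}$. Under this identification, commutativity of the diagram is exactly the intertwining relation $\iota(wu) = F(w)\bigl(\iota(u)\bigr)$ for all $u \in W_a$, i.e.\ $F(w)$ carries the coordinate vector of $u$ to that of $wu$. Since $L_{ww'} = L_w\circ L_{w'}$ and $W_a$ is generated by the affine reflections $s_{\alpha,p}$, it suffices to prove this relation for a single generator, define $F$ multiplicatively on words, and then check that the assignment is a well-defined homomorphism; injectivity is then automatic, for $F(w)=\mathrm{id}$ forces $\iota(wu)=\iota(u)$ for all $u$, hence $w=e$ by injectivity of $\iota$.

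First I would carry out the core computation on generators. Using $A_{s_{\alpha,p}u} = s_{\alpha,p}(A_u)$ together with $k(w,\gamma) = \lfloor (x,\gamma^\vee)\rfloor$ for any $x$ in the open alcove $A_w$, I would write $s_{\alpha,p}(x) = s_\alpha(x) + p\alpha$ and compute
\[
(s_{\alpha,p}(x),\gamma^\vee) = (x, s_\alpha(\gamma)^\vee) + p(\alpha,\gamma^\vee).
\]
Since $p(\alpha,\gamma^\vee)\in\mathbb{Z}$, taking floors and distinguishing whether $s_\alpha(\gamma)\in\Phi^+$ or $s_\alpha(\gamma)\in\Phi^-$ gives $k(s_{\alpha,p}u,\gamma) = k(u,s_\alpha(\gamma)) + p(\alpha,\gamma^\vee)$ in the first case and $k(s_{\alpha,p}u,\gamma) = -k(u,-s_\alpha(\gamma)) - 1 + p(\alpha,\gamma^\vee)$ in the second. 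Invoking $(\alpha,s_\alpha(\gamma)^\vee) = -(\alpha,\gamma^\vee)$, these are precisely the $\gamma$-components of $L_\alpha\,\iota(u) + v_{p,\alpha}$ dictated by \eqref{matrix F} and \eqref{coeff affine F}, which establishes $\iota(s_{\alpha,p}u) = F(s_{\alpha,p})(\iota(u))$. Since each $L_\alpha$ is a signed permutation matrix, hence orthogonal, every $F(s_{\alpha,p})$ is an isometry, and therefore so is every composite $F(w)$.

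It then remains to extend $F$ and show it is well defined. For a word $w = s_1\cdots s_\ell$ in the generators I would set $F(w) := F(s_1)\circ\cdots\circ F(s_\ell)$; iterating the generator relation yields $F(w)(\iota(u)) = \iota(wu)$ for every $u$, so any two words for the same $w$ give affine maps agreeing on all of $\iota(W_a)$. The crux, and the step I expect to be the main obstacle, is thus to prove that $\iota(W_a)$ affinely spans $\mathbb{R}^m$, since then affine maps agreeing on it coincide, giving at once well-definedness and the homomorphism identity $F(ww')=F(w)F(w')$ via $F(ww')\iota(u)=\iota(ww'u)=F(w)F(w')\iota(u)$. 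I would argue the spanning as follows: writing $(e_\gamma)_{\gamma\in\Phi^+}$ for the standard basis of $\mathbb{R}^m=\bigoplus_{\beta\in\Phi^+}\mathbb{R}\beta$, one has $\iota(e)=0$ and, because $N(s_{\alpha_i})=\{\alpha_i\}$ for a simple root $\alpha_i$, $\iota(s_{\alpha_i,0})=v_{0,\alpha_i}=-e_{\alpha_i}$. The generator relation shows each $F(s_{\alpha,p})$ permutes $\iota(W_a)$, so the direction space of its affine hull is invariant under the group generated by the $L_\alpha$. As $L_\alpha$ sends $e_\delta$ to $\pm e_{\gamma}$, where $\gamma$ is the positive root among $\pm s_\alpha(\delta)$, and since every positive root is $W$-conjugate to a simple root, the orbit of $\{e_{\alpha_i}\}$ spans $\mathbb{R}^m$. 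Hence the affine hull of $\iota(W_a)$ is all of $\mathbb{R}^m$, which completes the proof.
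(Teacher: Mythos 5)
Your proposal is correct; the one caveat on comparison is that this paper does not prove the statement at all --- it is imported as Theorem 3.1 of \cite{NC1} --- so there is no in-text argument to measure against, and your proof must be judged on its own terms. On those terms it holds up, and it is visibly the argument that the definitions (\ref{matrix F}) and (\ref{coeff affine F}) are engineered for: your floor computation $k(s_{\alpha,p}u,\gamma)=\lfloor (x,(s_\alpha\gamma)^\vee)\rfloor+p(\alpha,\gamma^\vee)$ for $x\in A_u$, split according to the sign of $s_\alpha(\gamma)$ and simplified via $(\alpha,s_\alpha(\gamma)^\vee)=-(\alpha,\gamma^\vee)$, reproduces exactly the $\gamma$-component $(L_\alpha\iota(u))_\gamma+v_{p,\alpha}(\gamma)$ in both cases, and orthogonality of the signed permutation matrices $L_\alpha$ gives the isometry claim. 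You also correctly identified and closed the step a blind attempt most often fumbles, namely well-definedness of the word-wise extension: your spanning argument is sound, since $0=\iota(e)$ and $-e_{\alpha_i}=\iota(s_{\alpha_i,0})$ lie in the image, the direction space of the affine hull of $\iota(W_a)$ is invariant under each $L_\alpha$ because $F(s_{\alpha,p})$ permutes $\iota(W_a)$, and every positive root is $W$-conjugate to a simple root, so the orbit of the vectors $e_{\alpha_i}$ spans $\mathbb{R}^m$. If you want a streamlining, observe that your generator computation goes through verbatim for an arbitrary $w\in W_a$ (the translation part of $w$ pairs integrally with every $\gamma^\vee$), which defines $F(w)(x)=L_{\overline{w}}x+\iota(w)$ in closed form; then $L_{\overline{w}}L_{\overline{w}'}=L_{\overline{w}\overline{w}'}$ is a direct check on signed permutations and the translation parts match by evaluating the intertwining relation at $\iota(e)=0$, so the homomorphism property and well-definedness come for free, without the affine-spanning lemma.
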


\begin{proposition}[\cite{NC1}, Proposition 4.3 ]\label{action isom}
Let $F : W_a \hookrightarrow$ Isom$(\mathbb{R}^n)$ be the $\Phi^+$-representation of $W_a$. Then we have:
\begin{itemize}
\item[1)] $W_a$ acts naturally on the irreducible components of $\widehat{X}_{W_a}$ via the action defined as $w\diamond X_{W_a}[\lambda] := F(w)(X_{W_a}[\lambda])$. Furthermore if we assume that $w\in W_a$ decomposes as ${w=\tau_x\overline{w}}$,  then ${w \diamond X_{W_a}[\lambda]=\overline{w}\diamond X_{W_a}[\lambda]}$. Finally this action is transitive.

\item[2)] The previous action induces an action on the admitted vectors by $w\diamond \lambda := \gamma$ such that $w\diamond X_{W_a}[\lambda] = X_{W_a}[\gamma]$. In other words we have $w\diamond X[\lambda]=X[w \diamond \lambda]$.
\end{itemize}
\end{proposition}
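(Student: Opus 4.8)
The plan is to build everything on two inputs from the excerpt: that $F$ is an injective group homomorphism $W_a\to \mathrm{Isom}(\mathbb{R}^m)$ (Theorem \ref{th Phi rep}), and that $\widehat{X}_{W_a}=\bigsqcup_{\lambda}X_{W_a}[\lambda]$, where each piece $X_{W_a}[\lambda]$ is the Zariski closure of the integral points it contains and the pieces are pairwise disjoint. First I would note that each $F(w)$ is an affine isometry, hence an automorphism of $\mathbb{R}^m$ as a variety with inverse $F(w^{-1})$. The commutative square of Theorem \ref{th Phi rep} reads $F(w)\circ\iota=\iota\circ L_w$, so $F(w)$ carries the integral points $\iota(W_a)$ bijectively onto themselves; applying the homeomorphism $F(w)$ to the closure $\widehat{X}_{W_a}=\overline{\iota(W_a)}$ then gives $F(w)(\widehat{X}_{W_a})=\widehat{X}_{W_a}$. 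Thus $F(w)$ restricts to an automorphism of $\widehat{X}_{W_a}$ and permutes the pieces, so $w\diamond X_{W_a}[\lambda]:=F(w)(X_{W_a}[\lambda])$ is again a piece $X_{W_a}[\gamma]$ for a unique admitted $\gamma$. This makes $w\diamond\lambda:=\gamma$ well defined (part 2), and $\diamond$ is a left action purely because $F$ is a homomorphism: $F(e)=\mathrm{id}$ and $F(w_1w_2)=F(w_1)F(w_2)$ give $e\diamond X_{W_a}[\lambda]=X_{W_a}[\lambda]$ and $(w_1w_2)\diamond X_{W_a}[\lambda]=w_1\diamond(w_2\diamond X_{W_a}[\lambda])$.

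Transitivity I would get directly from the integral points. Given admitted $\lambda,\mu$, choose integral points $\iota(u)\in X_{W_a}[\lambda]$ and $\iota(v)\in X_{W_a}[\mu]$, and set $w=vu^{-1}$. By the commutative square $F(w)(\iota(u))=\iota(wu)=\iota(v)$. Since $F(w)$ permutes the pairwise disjoint pieces and carries the point $\iota(u)\in X_{W_a}[\lambda]$ into $X_{W_a}[\mu]$, it must carry the whole piece across: $w\diamond X_{W_a}[\lambda]=X_{W_a}[\mu]$. Hence the action is transitive.

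The remaining and, I expect, only substantial point is that translations act trivially. Writing $w=\tau_x\overline{w}$ and using $F(w)=F(\tau_x)\circ F(\overline{w})$, the identity $w\diamond X_{W_a}[\lambda]=\overline{w}\diamond X_{W_a}[\lambda]$ is equivalent to $F(\tau_x)$ fixing every piece. As $\tau_x$ has trivial finite part, the linear part of $F(\tau_x)$ is the identity, so $F(\tau_x)$ is a pure translation; the commutative square identifies it as translation by $\big((x,\alpha^\vee)\big)_{\alpha\in\Phi^+}$, mirroring $k(\tau_x v,\alpha)=k(v,\alpha)+(x,\alpha^\vee)$. Because the integral points are dense in each piece and $F(\tau_x)$ is a homeomorphism, it is enough to check that $\iota(v)$ and $\iota(\tau_x v)$ always share a piece, i.e. that the admitted vector labelling the piece is invariant under $v\mapsto\tau_x v$. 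This comes down to the root-theoretic identity $(x,(\alpha+\beta)^\vee)=(x,\alpha^\vee)+(x,\beta^\vee)$ for $\alpha,\beta,\alpha+\beta\in\Phi^+$, which forces the defects $k(v,\alpha+\beta)-k(v,\alpha)-k(v,\beta)$ that record the piece to be unchanged by the translation. In the simply-laced type $\widetilde{A}_n$ at hand $\alpha^\vee$ is proportional to $\alpha$, so this identity is immediate from linearity of $(x,-)$; establishing that the translation vector lies in the common direction of the pieces is the heart of the matter, and the rest is formal.
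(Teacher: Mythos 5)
The paper itself offers no proof of this proposition: it is quoted verbatim from Proposition 4.3 of \cite{NC1}, so there is no in-text argument to compare yours against line by line. Judged on its own terms, your reconstruction is sound and is the natural one given the tools quoted here: $F(w)$ is an affine automorphism of $\mathbb{R}^m$ (defined over $\mathbb{Z}$, since the matrix entries are $0,\pm 1$ and the translation vector is integral); the square $F(w)\circ\iota=\iota\circ L_w$ makes it a bijection of the integral points, hence --- using that each $X_{W_a}[\lambda]$ is the Zariski closure of its integral points, a fact you import correctly from \cite{NC1} even though the excerpt does not state it --- a self-homeomorphism of $\widehat{X}_{W_a}$ permuting the pairwise disjoint components; the action axioms follow formally from $F$ being a homomorphism; and your transitivity argument (take $w=vu^{-1}$, note $F(w)(\iota(u))=\iota(v)$, and invoke disjointness of the components, each of which contains an integral point by the very definition of admitted vectors) is exactly right.

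One step, however, is stated incorrectly for the generality claimed. The proposition concerns arbitrary $W_a$, and your reduction of translation-invariance to the identity $(x,(\alpha+\beta)^\vee)=(x,\alpha^\vee)+(x,\beta^\vee)$ for $\alpha,\beta,\alpha+\beta\in\Phi^+$ fails outside the simply-laced case, because coroots are not additive across root addition when lengths differ: in $B_2$, with $\alpha=e_1-e_2$ and $\beta=e_2$, one has $(\alpha+\beta)^\vee=2e_1$ while $\alpha^\vee+\beta^\vee=e_1+e_2$. You flag this by retreating to $\widetilde{A}_n$ (which is indeed all this paper uses), but the correct general argument needs no additivity at all: by Shi's conditions as used in \cite{NC1}, the equations cutting out $X_{W_a}[\lambda]$ express $k(w,\alpha)$ through the expansion of the \emph{coroot} in simple coroots, $\alpha^\vee=\sum_{\delta\in\Delta}d_\delta\,\delta^\vee$, namely $k(w,\alpha)=\sum_\delta d_\delta\,k(w,\delta)+\lambda_\alpha$; since $F(\tau_x)$ translates the coordinates by $\bigl((x,\alpha^\vee)\bigr)_{\alpha\in\Phi^+}$ and $(x,\alpha^\vee)=\sum_\delta d_\delta\,(x,\delta^\vee)$ by plain linearity of $(x,-)$, every defining equation, hence every component, is preserved. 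With that substitution your proof is complete in all types; as written, it fully establishes only the type-$A$ case needed in this paper.
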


\subsection{The main theorem of the article}
In this article we thoroughly investigate $H^0(\widehat{X}_{W(\widetilde{A}_n)})$ from a combinatorial point of view. The main result of this article is Theorem \ref{Bijection entre pyras et permutations}. Let us first recall some basics about the symmetric group:

 We call \emph{circular permutations} the $(n+1)$-cycles  of $ S_{n+1}=W(A_{n})$. The action by conjugation of $W(A_n)$ on itself is defined for all $\sigma, \gamma \in W(A_n)$ by ${\sigma.\gamma :=\sigma \gamma \sigma^{-1}}$. This action appears in a lot of areas and has been studied many times. In particular,  understanding the orbits of the action, which are the conjugacy classes, yielded a lot of research work. For example, M.  Geck and G.  Pfeiffer used in  \cite{GP} the technology of cuspidal class in order to express any conjugacy class in terms of these cuspidal classes.
 
  We relate in this article the conjugacy class of $(1~2~\cdots~n+1)$ with the irreducible components of the Shi variety corresponding to $W(\widetilde{A}_n)$. Finally, the action by conjugation plays a crucial role in the following theorem:

\begin{theorem}\label{Bijection entre pyras et permutations}
There is a natural bijection between $H^0(\widehat{X}_{W(\widetilde{A}_n)})$ and the circular permutations of $W(A_n)$. In particular $|H^0(\widehat{X}_{W(\widetilde{A}_n)})|=n!$.
\end{theorem}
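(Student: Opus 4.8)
The plan is to realise $H^0(\widehat{X}_{W(\widetilde{A}_n)})$ geometrically and to read a circular permutation off each component. In type $A_n$ take $V=\{x\in\mathbb{R}^{n+1}:\sum_i x_i=0\}$, $\Phi^+=\{e_i-e_j:i<j\}$ and $\alpha_i^\vee=e_i-e_{i+1}$, so that $(x,(e_i-e_j)^\vee)=x_i-x_j$. By the facts recalled above, the elements of $H^0(\widehat{X}_{W(\widetilde{A}_n)})$ are indexed by the admitted vectors, equivalently by the alcoves $A_w\subset P_{\mathcal{H}}$, and for $x\in A_w$ one has $\lambda_{i,j}=k(w,e_i-e_j)=\lfloor x_i-x_j\rfloor$. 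The defining condition $\lambda_{i,i+1}=0$ says precisely that $x_1>x_2>\dots>x_{n+1}$ with every consecutive gap in $(0,1)$; writing $t_l=x_l-x_{l+1}\in(0,1)$ we get $\lambda_{i,j}=\lfloor t_i+\dots+t_{j-1}\rfloor$. To the component $\widehat{X}_{W_a}[\lambda]$ I would attach the cyclic order in which the reductions $\bar x_1,\dots,\bar x_{n+1}$ of $x_1,\dots,x_{n+1}$ appear on the circle $\mathbb{R}/\mathbb{Z}$, read counterclockwise; this is exactly an $(n+1)$-cycle of $S_{n+1}$, i.e.\ an element of the conjugacy class of $(1~2~\cdots~n+1)$. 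Call this map $\Psi$.

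First I would check that $\Psi$ is well defined. Inside an open alcove no relation $x_i-x_j\in\mathbb{Z}$ holds, so the $\bar x_l$ are pairwise distinct and their cyclic order is defined; the walls of $P_{\mathcal{H}}$ that can be crossed are exactly the collision loci $\{x_i-x_j\in\mathbb{Z}\}=\{\bar x_i=\bar x_j\}$, so the cyclic order is locally constant, and as alcoves are convex it is constant on each component.

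Next I would prove that $\Psi$ is a bijection, whence $|H^0(\widehat{X}_{W(\widetilde{A}_n)})|=n!$ because there are exactly $n!$ cyclic orders of $n+1$ labelled points. For surjectivity, given a target cyclic order I would choose representatives $p_1,\dots,p_{n+1}\in[0,1)$ in that order and lift greedily: set $x_{n+1}=p_{n+1}$ and, for $l=n,\dots,1$, let $x_l$ be the unique lift of $p_l$ lying in $(x_{l+1},x_{l+1}+1)$; this produces $t_l\in(0,1)$, hence an admitted vector mapping to the prescribed cyclic order since $\bar x_l=p_l$ by construction. For injectivity I would show that $\lambda$ is reconstructed from the cyclic order alone: fixing a marker $r$ immediately counterclockwise of $\bar x_i$, the integer $\lambda_{i,j}=\lfloor t_i+\dots+t_{j-1}\rfloor$ equals the number of the counterclockwise hops $\bar x_{l+1}\to\bar x_l$, for $i\le l\le j-1$, whose arc sweeps past $r$; this crossing count depends only on the cyclic order, so equal cyclic orders force equal admitted vectors.

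Finally this bijection is natural for the sequel: $W(A_n)$ acts by permuting the labels $1,\dots,n+1$, and relabelling a cyclic order is exactly conjugating the associated cycle, so $\Psi$ should intertwine the $\diamond$-action of Proposition~\ref{action isom} with conjugation on the conjugacy class of $(1~2~\cdots~n+1)$; combined with the transitivity in Proposition~\ref{action isom} this also yields bijectivity by an orbit--stabiliser count. The step I expect to be the main obstacle is injectivity, i.e.\ proving that the cyclic order is a \emph{complete} invariant: one must establish the crossing-number formula for $\lambda_{i,j}$ and, in particular, that the floors are insensitive to the actual arc-lengths. Well-definedness, surjectivity and the final count are comparatively routine.
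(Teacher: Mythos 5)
Your proof is correct, but it takes a genuinely different route from the paper's. The paper argues by orbit--stabilizer: Lemma \ref{stab diamond} shows by an explicit computation on the defining equations that $(1~2~\cdots~n+1)$ stabilizes $X_{W(\widetilde{A}_n)}[0]$, transitivity of $\diamond$ comes from Proposition \ref{action isom}, and --- crucially --- the count $|H^0(\widehat{X}_{W(\widetilde{A}_n)})|=n!$ is \emph{imported} from Theorem 4.3 of \cite{NC1}; counting then forces $|Stab_{\diamond}(X_{W(\widetilde{A}_n)}[0])|=n+1$, hence $Stab_{\diamond}(X_{W(\widetilde{A}_n)}[0])=\langle (1~2~\cdots~n+1)\rangle$, and the bijection is $\sigma(1~2~\cdots~n+1)\sigma^{-1}\mapsto \sigma\diamond X_{W(\widetilde{A}_n)}[0]$, with no combinatorial description of which cycle matches which admitted vector. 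You instead build the bijection explicitly: using components $\leftrightarrow$ admitted vectors $\leftrightarrow$ alcoves in $P_{\mathcal{H}}$ and $\lambda_{i,j}=\lfloor x_i-x_j\rfloor$, you read off the cyclic order of $\bar{x}_1,\dots,\bar{x}_{n+1}$ on $\mathbb{R}/\mathbb{Z}$; your greedy-lift surjectivity argument and your crossing-count injectivity argument (the winding number $\lfloor t_i+\cdots+t_{j-1}\rfloor$ counted against a marker just past $\bar{x}_i$ is a cyclic-order invariant, since each hop has length in $(0,1)$) are both sound. What your route buys: the cardinality $n!$ comes out as a consequence (number of cyclic orders of $n+1$ labelled points) instead of being an input from \cite{NC1}, and one sees concretely which circular permutation labels which component; what the paper's route buys is brevity and independence from the alcove geometry. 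The one step you should tighten is precisely the naturality claim you phrase as ``should intertwine'', since that \emph{is} the content of the word ``natural'' in the statement: you need that $\diamond$, defined via $F$ on the $k$-coordinates, matches coordinate permutation in your circle picture. This follows by unwinding the diagram of Theorem \ref{th Phi rep} ($F(\sigma)$ realizes left multiplication, i.e.\ $A_u\mapsto \sigma A_u$ on alcoves, which relabels the circle points by $\sigma$ and hence conjugates the cycle) together with the observation that translations by $\mathbb{Z}\Phi$ fix both the admitted part and the points $\bar{x}_l$, so the cyclic order descends from alcoves to components; a cosmetic orientation choice decides whether $X_{W(\widetilde{A}_n)}[0]$ maps to $(1~2~\cdots~n+1)$ or its inverse, which is harmless. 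With that supplied, your argument is complete and slightly stronger than the paper's, in that it reproves rather than cites the count $n!$.
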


In Theorem \ref{Bijection entre pyras et permutations} the word \say{natural} is used because the bijection involved respects the action of $W(A_n)$ on two different sets, namely the action we have defined on $H^0(\widehat{X}_{W(\widetilde{A}_n)})$ and the conjugation action on the circular permutations in $W(A_n)$.
Because of Proposition \ref{action isom} we know that the components are invariant under translations, that is only the finite part matters. Consequently, we will only look at the action of the finite part $W(A_n)$. Thus, the goal is to understand how the components behave when we apply $F(w)$ on them for any $w \in W(A_n)$. 

Giving explicit formulas for the action of $W(A_n)$ on $H^0(\widehat{X}_{W(\widetilde{A}_n)})$ is of particular interest as well. These formulas are established in Theorem \ref{changement coordonnees}.

\section{Bijection between $H^0(\widehat{X}_{W(\widetilde{A}_n)})$ and the set of circular permutations of $W(A_n)$ }

\subsection{Notations and setup}\label{notations}
Let $E$ be a $\mathbb{R}$-vector space and let $G$ be a group that acts linearly on $E$. This action induces a linear action on the $r$th exterior power $\bigwedge\nolimits^r(E)$ for all $r \in \mathbb{N}$. We call this action \emph{the diagonal action} on $\bigwedge\nolimits^r(E)$.
 
 For instance, if $G=W(A_n)$ is the symmetric group and $E$ is the vector space $E:=\text{span}(e_1,\cdots,e_{n+1})$ then $G$ acts on $E$ by permutation of the coordinates, that is $\sigma.e_i = e_{\sigma(i)}$. Then the diagonal action of $G$ on $\bigwedge\nolimits^2(E)$ is given by 
 $$
 \sigma \cdot (e_{k} \wedge e_{\ell})  =e_{\sigma(k)} \wedge e_{\sigma(\ell)}.
 $$

From now on $\{e_1,\cdots,e_{n+1}\}$ represents the canonical basis of $K:=\mathbb{R}^{n+1}$.  Recall that a way  to represent the root system $A_n$ in $K$ is by $\Phi :=\{\pm( e_i-e_j)~|~ 1 \leq i < j \leq n+1 \}$ with $\Phi^+ = \{ e_i-e_j~|~ 1 \leq i < j \leq n+1 \}$ and with simple system $\Delta = \{ e_i-e_{i+1}~|~ 1 \leq i  \leq n \}$.  We recall that $m=|\Phi^+|$ and we set $Y$ to be the $\mathbb{R}$-vector space with basis $e_{i,j}$ for $1 \leq i<j \leq n+1$. The elements $e_{i,j}$ are in obvious bijection with the positive roots $e_i-e_j$. We denote by  $s_{k,\ell}$ the reflection $s_{e_k-e_{\ell},0}$, namely $s_{k,\ell}$ is the transposition of $S_{n+1}$ that swaps $k$ and $\ell$. Finally, we will denote $L_{k,\ell} := L_{e_k-e_{\ell}}$.

\subsection{Affine diagonal action}

\begin{definition}\label{def action}
We define the \emph{affine diagonal action} of $W(A_n)$ on $\bigwedge\nolimits^2(K)$ as follows. Write $W(A_n) =\langle s_{1},\cdots,s_{n} \rangle$ where $s_{i}$ is the adjacent transposition $(i, i+1)$. We define the operation $\odot$ on the generators $s_i$ and on the basis of $\bigwedge\nolimits^2(K)$, where for all $k < \ell$ we have
 \begin{align}\label{def_action}
  s_{i}\odot e_k \wedge e_{\ell} & := (i,i+1)\cdot (e_k \wedge e_{\ell}) - e_i \wedge e_{i+1}  = e_{s_i(k)} \wedge e_{s_i(\ell)} - e_i \wedge e_{i+1},
 \end{align}
and we extend it as follows 
\begin{equation} \label{action_gen}
s_i \odot (\sum\limits_{r<s}x_{r,s}e_r \wedge e_s) = (\sum\limits_{r<s}x_{r,s}e_{s_i(r)} \wedge e_{s_i(s)}) -e_i \wedge e_{i+1}.
\end{equation}

 \end{definition}
 
 \begin{proposition}\label{action rond}
 The operation $\odot : W(A_n) \times \bigwedge\nolimits^2(K) \rightarrow \bigwedge\nolimits^2(K)$ is an action which is not linear.
 \end{proposition}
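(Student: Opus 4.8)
The plan is to recognise that the whole operation is genuinely \emph{affine}: by \eqref{action_gen} each generator acts as
$$
s_i \odot v = s_i \cdot v + b_i, \qquad b_i := -e_i \wedge e_{i+1},
$$
where $s_i \cdot (-)$ denotes the ordinary (linear) diagonal action on $\bigwedge^2(K)$. Thus the assignment $s_i \mapsto \Psi(s_i) := (s_i\cdot(-),\, b_i)$ is a candidate map from the generators of $W(A_n)$ into the affine group $\mathrm{Aff}(\bigwedge^2(K)) = GL(\bigwedge^2(K)) \ltimes \bigwedge^2(K)$, whose natural action on $\bigwedge^2(K)$ recovers $\odot$. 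To prove that $\odot$ is a well-defined group action it suffices to check that $\Psi$ respects the Coxeter presentation of $W(A_n)$, that is, that the affine transformations $\Psi(s_i)$ satisfy $\Psi(s_i)^2 = \mathrm{id}$, the commutations $\Psi(s_i)\Psi(s_j) = \Psi(s_j)\Psi(s_i)$ for $|i-j| \geq 2$, and the braid relations $\Psi(s_i)\Psi(s_{i+1})\Psi(s_i) = \Psi(s_{i+1})\Psi(s_i)\Psi(s_{i+1})$. Once these hold, $\Psi$ extends to a homomorphism, so $w \odot v$ is independent of the word chosen for $w$ and the action axioms follow automatically.

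Recalling that composition in $\mathrm{Aff}(\bigwedge^2(K))$ reads $(A,b)(A',b') = (AA',\, Ab'+b)$, the linear components of all three relations are automatic, since $s_i \mapsto s_i\cdot(-)$ is the genuine linear diagonal representation. So everything reduces to comparing the translation vectors. For the involution relation one computes $s_i \cdot b_i = -e_{i+1} \wedge e_i = e_i \wedge e_{i+1} = -b_i$, so the translation part $s_i\cdot b_i + b_i$ of $\Psi(s_i)^2$ vanishes. For the commutation with $|i-j| \geq 2$, the transposition $s_i$ fixes both $e_j$ and $e_{j+1}$, whence $s_i \cdot b_j = b_j$ and symmetrically $s_j \cdot b_i = b_i$; the two translation parts $s_i\cdot b_j + b_i$ and $s_j \cdot b_i + b_j$ therefore coincide.

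The braid relation is the one genuine computation, and I expect it to be the main obstacle. Writing $a=i$, $b=i+1$, $c=i+2$ and $A_i = s_i\cdot(-)$, $A_j = s_{i+1}\cdot(-)$, the translation part of $\Psi(s_i)\Psi(s_{i+1})\Psi(s_i)$ equals $A_iA_j b_i + A_i b_j + b_i$, while that of $\Psi(s_{i+1})\Psi(s_i)\Psi(s_{i+1})$ equals $A_jA_i b_j + A_j b_i + b_j$. Evaluating the diagonal action on the relevant wedges (using $s_i = (a,b)$ and $s_{i+1} = (b,c)$) one finds that both expressions collapse to the symmetric vector
$$
-\,e_a \wedge e_b - e_a \wedge e_c - e_b \wedge e_c,
$$
so the braid relation is satisfied and $\odot$ is a bona fide group action.

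Finally, non-linearity is immediate: a linear map must fix the origin, but $s_i \odot 0 = s_i \cdot 0 + b_i = -e_i \wedge e_{i+1} \neq 0$. Hence $v \mapsto s_i \odot v$ is properly affine (its translation part is nonzero), and therefore $\odot$ is not linear.
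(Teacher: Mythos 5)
Your proof is correct, and it takes a genuinely cleaner route than the paper's. The paper verifies the braid, commutation, and involution relations by brute force: it applies the composites $s_i \odot (s_{i+1} \odot (s_i \odot (e_k \wedge e_\ell)))$ etc.\ to basis wedges and expands everything via Definition \ref{def action}, tracking all the correction terms by hand (and it never actually addresses the non-linearity claim). You instead observe that \eqref{action_gen} says each generator acts as the affine map $v \mapsto s_i \cdot v + b_i$ with $b_i = -e_i \wedge e_{i+1}$, and check the Coxeter relations inside $GL(\bigwedge^2(K)) \ltimes \bigwedge^2(K)$: the linear components satisfy the relations for free because the diagonal action is an honest representation, so everything reduces to the three translation-vector identities, which you compute correctly (in particular both braid composites have translation part $-e_a\wedge e_b - e_a\wedge e_c - e_b\wedge e_c$, matching the terms visible in the paper's expansion). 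This repackaging buys two things. First, it silently repairs a small gap in the paper's argument: the paper checks the relations only on the basis vectors $e_k \wedge e_\ell$, which a priori does not determine an affine map (the affine span of a basis is a proper hyperplane); the check is nonetheless sufficient because both composites are affine with equal linear parts, hence agree everywhere once they agree at one point --- exactly the fact your decomposition makes explicit. Second, your final observation $s_i \odot 0 = b_i \neq 0$ supplies the non-linearity assertion of the proposition, which the paper's proof omits entirely.
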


\begin{proof}
In order to prove this statement we just need to check that the relations of $W(A_n)$ are preserved under the operation $\odot$. It turns out that this is the case because
\begin{align*}
& \text{~} \text{~} \text{~} \text{~} s_i \odot (s_{i+1} \odot (s_i \odot (e_k \wedge e_{\ell})))\\ & = s_i \odot (s_{i+1}\odot (e_{s_i(k)} \wedge e_{s_i(\ell)} - e_i \wedge e_{i+1})) \\
                                                  & = s_i\odot (e_{s_{i+1}s_i(k)} \wedge e_{s_{i+1}s_i(\ell)} - e_{s_{i+1}(i)} \wedge e_{s_{i+1}(i+1)} - e_{i+1} \wedge e_{i+2}) \\
                                                  & = s_i\odot (e_{s_{i+1}s_i(k)} \wedge e_{s_{i+1}s_i(\ell)} - e_{i} \wedge e_{i+2} - e_{i+1} \wedge e_{i+2}) \\
                                                  & = e_{s_is_{i+1}s_i(k)} \wedge e_{s_is_{i+1}s_i(\ell)} - e_{s_i(i)} \wedge e_{s_i(i+2)} - e_{s_i(i+1)} \wedge e_{s_i(i+2)}-e_i \wedge e_{i+1}\\
                                                  & =  e_{s_is_{i+1}s_i(k)} \wedge e_{s_is_{i+1}s_i(\ell)} - e_{i+1} \wedge e_{i+2} - e_{i} \wedge e_{i+2}-e_i \wedge e_{i+1},
\end{align*}
and
\begin{align*}
& \text{~} \text{~} \text{~} \text{~}  s_{i+1} \odot (s_{i} \odot (s_{i+1} \odot (e_k \wedge e_{\ell}))) \\& = s_{i+1} \odot (s_{i}\odot (e_{s_{i+1}(k)} \wedge e_{s_{i+1}(\ell)} - e_{i+1} \wedge e_{i+2})) \\
                                                  & = s_{i+1}\odot (e_{s_{i}s_{i+1}(k)} \wedge e_{s_{i}s_{i+1}(\ell)} - e_{s_{i}(i+1)} \wedge e_{s_{i}(i+2)} - e_{i} \wedge e_{i+1}) \\
                                                  & = s_{i+1}\odot(e_{s_{i}s_{i+1}(k)} \wedge e_{s_{i}s_{i+1}(\ell)} - e_{i} \wedge e_{i+2} - e_{i} \wedge e_{i+1}) \\
                                                  & = e_{s_{i+1}s_{i}s_{i+1}(k)} \wedge e_{s_{i+1}s_{i}s_{i+1}(\ell)} - e_{s_{i+1}(i)} \wedge e_{s_{i+1}(i+2)} ~- \\
                                                  & ~~~~e_{s_{i+1}(i)} \wedge e_{s_{i+1}(i+1)}-e_{i+1} \wedge e_{i+2}\\
                                                  & =  e_{s_is_{i+1}s_i(k)} \wedge e_{s_is_{i+1}s_i(\ell)} - e_{i} \wedge e_{i+1} - e_{i} \wedge e_{i+2}-e_{i+1} \wedge e_{i+2}.
\end{align*}

Thus the mesh relation $s_is_{i+1}s_i = s_{i+1}s_is_{i+1}$ is preserved under this action. For $|i-j| \geq 2$ we know that $s_is_j = s_js_i$. Let us show that this relation is also preserved. This is the case because
\begin{align*}
s_i\odot (s_j\odot (e_k \wedge e_{\ell})) & = s_i\odot (e_{s_j(k)} \wedge e_{s_j(\ell)}-e_j\wedge e_{j+1}) \\
										   & = e_{s_is_j(k)} \wedge e_{s_is_j(\ell)}-e_{s_i(j)}\wedge e_{s_i(j+1)}-e_i \wedge e_{i+1} \\
										   & = e_{s_is_j(k)} \wedge e_{s_is_j(\ell)}-e_j\wedge e_{j+1}-e_i \wedge e_{i+1},
\end{align*}
and
\begin{align*}
s_j\odot (s_i\odot (e_k \wedge e_{\ell})) & = s_j\odot (e_{s_i(k)} \wedge e_{s_i(\ell)}-e_i\wedge e_{i+1}) \\
										   & = e_{s_js_i(k)} \wedge e_{s_js_i(\ell)}-e_{s_j(i)}\wedge e_{s_j(i+1)}-e_j \wedge e_{j+1} \\
										   & = e_{s_js_i(k)} \wedge e_{s_js_i(\ell)}-e_{i}\wedge e_{i+1}-e_j \wedge e_{j+1}.
\end{align*}
There is one type of relation left to check: those coming from the involutions. Thus
\begin{align*}
s_i \odot (s_i \odot e_k \wedge e_{\ell}) & = s_i \odot (e_{s_i(k)} \wedge e_{s_i(\ell)} - e_i \wedge e_{i+1} ) \\
													       & = e_{s_is_i(k)} \wedge e_{s_is_i(\ell)} - e_{s_i(i)} \wedge e_{s_i(i+1)} - e_i \wedge e_{i+1} \\
													       & = e_k \wedge e_{\ell} -e_{i+1} \wedge e_i - e_i \wedge e_{i+1} \\
													       & = e_k \wedge e_{\ell} +e_{i} \wedge e_{i+1} - e_i \wedge e_{i+1}= e_k \wedge e_{\ell}.
\end{align*}
\end{proof}

\begin{definition}
We define $\Theta$ from $Y$ to $\bigwedge\nolimits^2(K)$ to be the linear map sending the elements $e_{i,j}$ of the basis of $Y$ to the elements $e_i \wedge e_j$ of the basis of  $\bigwedge\nolimits^2(K)$.
\end{definition}

\begin{proposition}\label{isomorphisme action}
The map $\Theta$ is an isomorphism from the $ \Phi^+$-action of $W(A_n)$ on $Y$ to the affine diagonal action of $W(A_n)$ on $\bigwedge^2(K)$.
\end{proposition}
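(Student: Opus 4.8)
The plan is to show that $\Theta$ is a bijective linear map intertwining the two actions, and since both actions are affine, to reduce everything to checking the intertwining relation on the Coxeter generators $s_i$ applied to the basis of $Y$. First I would observe that $\Theta$ is a linear isomorphism for free: by construction it carries the basis $\{e_{i,j}\}_{1\leq i<j\leq n+1}$ of $Y$ bijectively onto the basis $\{e_i\wedge e_j\}_{1\leq i<j\leq n+1}$ of $\bigwedge\nolimits^2(K)$, and $\dim Y = m = |\Phi^+| = \binom{n+1}{2} = \dim\bigwedge\nolimits^2(K)$.

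Next, because $F$ is a group homomorphism (Theorem \ref{th Phi rep}) and $\odot$ is a group action (Proposition \ref{action rond}), the desired equivariance $\Theta\bigl(F(w)(y)\bigr) = w \odot \Theta(y)$ for all $w \in W(A_n)$ and $y \in Y$ follows once it is established on the generators $s_i = s_{i,i+1} = s_{e_i-e_{i+1},0}$ and on each basis vector $e_{k,\ell}$: one propagates the relation through a reduced word of $w$, using that both $F$ and $\odot$ respect composition. So the crux is the single identity
$$
\Theta\bigl(F(s_i)(e_{k,\ell})\bigr) = s_i \odot (e_k \wedge e_\ell).
$$

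To establish it I would unwind $F(s_i)(e_{k,\ell}) = L_{i,i+1}(e_{k,\ell}) + v_{0,e_i-e_{i+1}}$, noting that here $p=0$ since the generators $s_i$ fix the origin. For the linear part, $s_i$ sends the root $e_k - e_\ell$ to $e_{s_i(k)} - e_{s_i(\ell)}$; by (\ref{matrix F}) the image $L_{i,i+1}(e_{k,\ell})$ equals $e_{s_i(k),s_i(\ell)}$ when $s_i(k) < s_i(\ell)$ and $-e_{s_i(\ell),s_i(k)}$ when $s_i(k) > s_i(\ell)$. Applying $\Theta$ and the antisymmetry $e_b \wedge e_a = -e_a \wedge e_b$, both cases collapse to $e_{s_i(k)} \wedge e_{s_i(\ell)}$. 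For the affine part, with $p=0$ the vector (\ref{coeff affine F}) has coordinate $-1$ exactly at those $\gamma \in \Phi^+$ with $s_i(\gamma) \in \Phi^-$; since $s_i$ is a simple reflection its only such positive root is $\alpha_i = e_i - e_{i+1}$, whence $v_{0,e_i-e_{i+1}} = -e_{i,i+1}$ and $\Theta(v_{0,e_i-e_{i+1}}) = -e_i \wedge e_{i+1}$. Summing the two contributions yields $e_{s_i(k)} \wedge e_{s_i(\ell)} - e_i \wedge e_{i+1}$, which is precisely $s_i \odot (e_k \wedge e_\ell)$ by (\ref{def_action}).

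The main conceptual point, and the step requiring the most care, is the reconciliation of the signed-permutation matrix $L_{i,i+1}$ with the alternating structure of $\bigwedge\nolimits^2(K)$: the entries $-1$ of (\ref{matrix F}), which record when a positive root is sent to a negative one, are exactly absorbed by the antisymmetry of the wedge product, so that under $\Theta$ the linear part of the $\Phi^+$-action becomes the genuine diagonal permutation $e_k \wedge e_\ell \mapsto e_{s_i(k)} \wedge e_{s_i(\ell)}$. Everything else is bookkeeping: identifying $p=0$ for the generators and verifying that the translation term matches the correction $-e_i \wedge e_{i+1}$ built into Definition \ref{def action}.
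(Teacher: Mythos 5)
Your proposal is correct and follows essentially the same route as the paper's proof: observe that $\Theta$ carries a basis to a basis, reduce equivariance to the generators $s_i$ via the homomorphism property of $F$ and the action property of $\odot$, and verify $\Theta\bigl(F(s_i)(e_{k,\ell})\bigr) = e_{s_i(k)}\wedge e_{s_i(\ell)} - e_i\wedge e_{i+1}$ directly from (\ref{matrix F}) and (\ref{coeff affine F}). The only difference is one of explicitness: you unwind the sign conventions (the $-1$ entries of $L_{i,i+1}$ being absorbed by antisymmetry of the wedge, and $p=0$ giving $v_{0,\alpha_i}=-e_{i,i+1}$), details which the paper states as immediate consequences of (\ref{matrix F}) and (\ref{coeff affine F}) without computation.
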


\begin{proof}
We need to show that the following diagram commutes  for all  $w \in W(A_n)$
$$
 \xymatrix{
    Y \ar[r]^{F(w)} \ar[d]^{\wr}_{\Theta}  &  Y \ar[d]_{\wr}^{\Theta}\\
  \bigwedge\nolimits^2(K) \ar[r]_{\varphi_{w}} & \bigwedge\nolimits^2(K)
  }
  $$
where $\varphi_{w}(e_i\wedge e_j):=w \odot e_i \wedge e_j$.  However, it is enough to only show the commutativity of this diagram for the generators $s_i \in S$. First of all, $\Theta$ is an isomorphism because the set $\{e_{i,j}~|~1\leq i<j \leq n+1 \}$ is a basis of $Y$, and  ${\{e_i \wedge e_j ~|~1 \leq i<j \leq n+1\}}$ is a basis of $\bigwedge\nolimits^2(K)$. Let $1 \leq k < \ell \leq n+1$. With respect to the commutativity we have  
  \begin{align*}
   \varphi_{s_i} \circ \Theta(e_{k,\ell}) & =  \varphi_{s_i}(e_k \wedge e_{\ell}) = e_{s_i(k)}\wedge e_{s_i(\ell)} - e_i \wedge e_{i+1}.
  \end{align*}

 Moreover, it follows from (\ref{matrix F}) and (\ref{coeff affine F}) that $F(s_i)(e_{k,\ell}) = e_{s_i(k),s_i(\ell)} - e_{i,i+1}$. Therefore
\begin{align*}
  \Theta \circ F(s_i)(e_{k,\ell}) & = \Theta(e_{s_i(k),s_i(\ell)} - e_{i,i+1}) = \Theta(e_{s_i(k),s_i(\ell)}) - \Theta(e_{i,i+1})  \\ 
  & = e_{s_i(k)} \wedge e_{s_i(\ell)} - e_i \wedge e_{i+1}.
\end{align*}

\end{proof}

\begin{corollary}\label{action sur reflexion}
Let $w \in W(A_n)$  and $1 \leq k < \ell \leq n+1$. Then we have $$w \odot e_k \wedge e_{\ell} = e_{w(k)} \wedge e_{w(\ell)} - \sum\limits_{e_r-e_s \in N(w)} e_r \wedge e_s.$$
\end{corollary}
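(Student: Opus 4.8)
The plan is to argue by induction on the Coxeter length $\ell(w)$, using that $\odot$ is a genuine action (Proposition \ref{action rond}) together with the inversion-set recursion (\ref{N}). For the base case $w=e$ we have $N(e)=\emptyset$ and $e$ acts as the identity, so both sides equal $e_k\wedge e_\ell$ and the formula holds.

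For the inductive step I would choose a left descent of $w$ and write $w=s_iu$ with $\ell(w)=\ell(u)+1$. Since $\odot$ is an action we have $w\odot(e_k\wedge e_\ell)=s_i\odot\bigl(u\odot(e_k\wedge e_\ell)\bigr)$, so I apply the induction hypothesis to $u$ and then hit the result with $s_i\odot$. By the extension rule (\ref{action_gen}), applying $s_i\odot$ amounts to performing the ordinary (linear) diagonal permutation $e_a\wedge e_b\mapsto e_{s_i(a)}\wedge e_{s_i(b)}$ on the whole vector and then subtracting a single copy of $e_i\wedge e_{i+1}$. The leading term becomes $e_{s_iu(k)}\wedge e_{s_iu(\ell)}=e_{w(k)}\wedge e_{w(\ell)}$, the inversion sum of $u$ is transported to $\sum_{e_r-e_s\in N(u)}e_{s_i(r)}\wedge e_{s_i(s)}$, and we pick up the extra term $-\,e_i\wedge e_{i+1}$.

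It then remains to identify
$$ e_i\wedge e_{i+1}+\sum_{e_r-e_s\in N(u)}e_{s_i(r)}\wedge e_{s_i(s)}=\sum_{e_r-e_s\in N(w)}e_r\wedge e_s. $$
This is exactly the content of the decomposition (\ref{N}) applied to the reduced factorization $w=s_iu$, which reads $N(w)=N(s_i)\sqcup s_iN(u)=\{e_i-e_{i+1}\}\sqcup s_iN(u)$: the isolated term $e_i\wedge e_{i+1}$ accounts for the simple root $e_i-e_{i+1}\in N(s_i)$, and the transported sum runs over $s_iN(u)$. The point that makes the wedges match the basis elements without spurious signs is that $s_iN(u)\subseteq N(w)\subseteq\Phi^+$, so each $s_i(e_r-e_s)=e_{s_i(r)}-e_{s_i(s)}$ is again a positive root; this forces $s_i(r)<s_i(s)$, whence $e_{s_i(r)}\wedge e_{s_i(s)}$ is precisely the basis bivector attached to that positive root.

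The main obstacle is this last bookkeeping step: one must check that the single affine shift $-e_i\wedge e_{i+1}$ contributes exactly the new inversion $e_i-e_{i+1}$ and that conjugating the old inversions of $u$ by $s_i$ reproduces $s_iN(u)$ with the correct orientation. Both are governed by (\ref{N}) and by the positivity of $s_iN(u)$, so once these are in place the inductive identity follows immediately.
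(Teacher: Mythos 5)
Your proof is correct and follows essentially the same route as the paper: induction on the length with a reduced factorization $w=s_iu$, applying the extension rule (\ref{action_gen}) to the inductive hypothesis, and absorbing the extra $-e_i\wedge e_{i+1}$ together with the transported sum via the decomposition $N(w)=\{e_i-e_{i+1}\}\sqcup s_iN(u)$ from (\ref{N}). Your explicit check that $s_iN(u)\subseteq\Phi^+$ forces $s_i(r)<s_i(s)$, so the wedges land on honest basis bivectors with no sign corrections, is a bookkeeping point the paper leaves implicit.
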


\begin{proof}
We proceed by induction on the length. The base case follows from (\ref{def_action}). Let us write ${w:=s_{i_1}s_{i_2}\cdots s_{i_p}:=s_{i_1}w'}$ be a reduced expression of $w$. Let us also assume that $w' \odot e_k \wedge e_{\ell} = e_{w'(k)} \wedge e_{w'(\ell)} - \sum\limits_{e_r-e_s \in N(w')} e_r \wedge e_s$. Because of (\ref{N}) we know that 
\begin{align}\label{N'}
N(w) &= N(s_{i_1}) \sqcup s_{i_1}N(w') = \{e_{i_1}-e_{i_1+1}\} \sqcup s_{i_1}N(w').
\end{align}

Therefore, it follows that
\begin{align*}
s_{i_1}w' \odot e_k \wedge e_{\ell} & ~= s_{i_1}\odot (w' \odot e_k \wedge e_{\ell})  =  s_{i_1} \odot (e_{w'(k)} \wedge e_{w'(\ell)} - \sum\limits_{e_r-e_s \in N(w')} e_r \wedge e_s ) \\
												  & \stackrel{(\ref{action_gen})}{=} e_{s_{i_1}w'(k)} \wedge e_{s_{i_1}w'(\ell)} - \sum\limits_{e_r-e_s \in N(w')} e_{s_{i_1}(r)} \wedge e_{s_{i_1}(s)} -e_{i_1} \wedge e_{i_1+1} \\
												  & \stackrel{(\ref{N'})}{=} e_{w(k)} \wedge e_{w(\ell)} -\sum\limits_{e_r-e_s \in N(w)} e_r \wedge e_s.
\end{align*}
\end{proof}

\subsection{Proof of the main theorem}

\begin{lemma}\label{stab diamond}
$(1~2~\cdots~n+1)\diamond X_{W(\widetilde{A}_n)}[0] = X_{W(\widetilde{A}_n)}[0]$.
\end{lemma}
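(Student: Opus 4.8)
The plan is to reduce the statement to a short computation with the affine diagonal action. By Proposition \ref{action isom}(2) it is equivalent to prove that $c \diamond 0 = 0$ for the admitted vector $0$, where I write $c := (1~2~\cdots~n+1)$. I would transport everything through the isomorphism $\Theta$ of Proposition \ref{isomorphisme action}, so that the $\Phi^+$-action $F(c)$ becomes the affine map $v \mapsto c \odot v$ on $\bigwedge^2(K)$ and $X_{W(\widetilde{A}_n)}[0]$ becomes a Zariski closed set through the origin $0 = \Theta(\iota(e))$ (indeed $\iota(e)=(k(e,\alpha))_\alpha = 0$, and $e$ has admitted vector $0$). Since the decomposition $\widehat{X}_{W(\widetilde{A}_n)} = \bigsqcup_{\lambda} X_{W(\widetilde{A}_n)}[\lambda]$ is disjoint, two components sharing a point must coincide; as $c \odot 0 = F(c)(0)$ lies on $c \diamond X_{W(\widetilde{A}_n)}[0]$ by definition, it suffices to show that $c \odot 0$ also lies on $X_{W(\widetilde{A}_n)}[0]$.

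First I would pin down the common direction of the components. By Proposition \ref{action isom}(1) every pure translation $\tau_x$ acts trivially, so $F(\tau_x)$ fixes $X_{W(\widetilde{A}_n)}[0]$ setwise; since $0$ lies on this component, so does $\iota(\tau_x) = F(\tau_x)(0)$ for every $x \in \mathbb{Z}\Phi$ (using the commuting diagram of Theorem \ref{th Phi rep}). Writing $\mathbf{1} := e_1 + \cdots + e_{n+1}$, one checks that under $\Theta$ these translation vectors are exactly the elements $x \wedge \mathbf{1}$, which form a full-rank lattice spanning the $n$-dimensional subspace $D := \{ x \wedge \mathbf{1} ~|~ x \in K \}$. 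As $\Theta(X_{W(\widetilde{A}_n)}[0])$ is Zariski closed and contains this lattice, which is Zariski dense in $D$, I obtain $D \subseteq \Theta(X_{W(\widetilde{A}_n)}[0])$.

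It then remains to compute $c \odot 0$ and to verify that it lies in $D$. Because $v \mapsto c \odot v$ is affine with homogeneous part the ordinary diagonal action, its translation part is read off from Corollary \ref{action sur reflexion}, giving $c \odot 0 = -\sum_{e_r - e_s \in N(c)} e_r \wedge e_s$. A short computation with $c = (1~2~\cdots~n+1)$ yields $N(c) = \{ e_1 - e_j ~|~ 2 \leq j \leq n+1 \}$, whence
$$
c \odot 0 = - \sum_{j=2}^{n+1} e_1 \wedge e_j = - e_1 \wedge \Big( \sum_{j=2}^{n+1} e_j \Big) = - e_1 \wedge \mathbf{1} = (-e_1) \wedge \mathbf{1} \in D .
$$
Hence $c \odot 0 \in D \subseteq \Theta(X_{W(\widetilde{A}_n)}[0])$, so $X_{W(\widetilde{A}_n)}[0]$ and $c \diamond X_{W(\widetilde{A}_n)}[0]$ meet; by disjointness of the components they are equal, which is the claim.

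The main obstacle is the second paragraph: the map $F(c)$ does \emph{not} fix the base point, since $c \odot 0 \neq 0$, so no naive fixed-point argument works. The whole point is that the inhomogeneous term $-\sum_{e_r-e_s \in N(c)} e_r \wedge e_s$ produced by the affine action must be absorbed into the translation direction $D$; concretely, one must recognise that the inversion vector of the circular permutation $c$ is the pure difference vector $-e_1 \wedge \mathbf{1} \in K \wedge \mathbf{1}$. Establishing that $D$ is precisely the common direction of all components (via Proposition \ref{action isom}(1)) and carrying out this identification with $K \wedge \mathbf{1}$ is the step requiring care.
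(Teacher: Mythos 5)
Your proof is correct, and it takes a genuinely different route from the paper's. The paper argues at the level of defining equations: it writes $X_{W(\widetilde{A}_n)}[0]$ as the zero locus of the equations $E_{i,j}\colon X_{i,j}-\sum_{r=i}^{j-1}X_{r,r+1}=0$, computes via (\ref{action equations}) how $(1~2~\cdots~n+1)$ transforms this system (using $X_{i,j}=-X_{j,i}$), observes that all the transformed equations still vanish at the origin, hence $0\in(1~2~\cdots~n+1)\diamond X_{W(\widetilde{A}_n)}[0]$, and concludes by uniqueness of the component through $0$. You go in the opposite direction, showing $F(c)(0)\in X_{W(\widetilde{A}_n)}[0]$ rather than $0$ lying in the image component, and you locate $F(c)(0)$ structurally: translation-invariance from Proposition~\ref{action isom} puts the lattice $\{x\wedge\mathbf{1}~|~x\in\mathbb{Z}\Phi\}$ inside $\Theta(X_{W(\widetilde{A}_n)}[0])$, Zariski-closedness upgrades this to the full subspace $D=\{x\wedge\mathbf{1}~|~x\in K\}$, and Corollary~\ref{action sur reflexion} identifies the translation part of $v\mapsto c\odot v$ as $-\sum_{e_r-e_s\in N(c)}e_r\wedge e_s=-e_1\wedge\mathbf{1}\in D$ (your computation of $N(c)$ is right, as is the final disjointness step, which is the one ingredient shared with the paper). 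Your route avoids the equation bookkeeping, exposes the structural fact that every component contains the translation direction $D$, and yields a clean sufficient criterion for $w\diamond X_{W(\widetilde{A}_n)}[0]=X_{W(\widetilde{A}_n)}[0]$, namely $\sum_{e_r-e_s\in N(w)}e_r\wedge e_s\in D$, which makes transparent why the full cycle works (its inversion set is the fan $\{e_1-e_j~|~2\leq j\leq n+1\}$); the paper's computation, by contrast, delivers the explicit and reusable transformation rules (\ref{action equations}). Two steps deserve one more line each in a final write-up: justify $\Theta(\iota(\tau_x))=x\wedge\mathbf{1}$ by the computation $k(\tau_x,e_i-e_j)=(x,(e_i-e_j)^{\vee})=x_i-x_j$, so that $\Theta(\iota(\tau_x))=\sum_{i<j}(x_i-x_j)\,e_i\wedge e_j=x\wedge\mathbf{1}$; and note that $v\mapsto c\odot v$ is affine because it is a composite of the affine generator maps (\ref{action_gen}) with linear part the diagonal action, so evaluating Corollary~\ref{action sur reflexion} on a single basis vector legitimately reads off the translation term.
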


\begin{proof}
Let us write $E_{i,j}$ for the equation $X_{i,j}- \sum\limits_{r=i}^{j-1} X_{r,r+1}=0$ for $1 \leq i < j \leq n+1$, and $E_{i,j}'$ for the equation $X_{1,j}-X_{1,i+1} - \sum\limits_{r=i+1}^{n} X_{r,r+1}=0$ for $1 <i < j \leq n+1$.  We know that $X_{W(\widetilde{A}_n)}[0]$ is the component of $\widehat{X}_{W(\widetilde{A}_n)}$ cut out by the equations $E_{i,j}$. 
We analyze the action of ${(1~2~\cdots~n+1)}$ on $X_{W(\widetilde{A}_n)}[0]$ by studying the action of $(1~2~\cdots~n+1)$ on the equations $E_{i,j}$, with the relations $X_{i,j} = -X_{j,i}$ for $i <j$. The equations that cut out ${(1~2~\cdots~n+1)\diamond X_{W(\widetilde{A}_n)}[0]}$ are exactly the $ (1~2~\cdots~n+1) \cdot E_{i,j}$, and a short computation shows that

\begin{equation}\label{action equations}
(1~2~\cdots~n+1) \cdot E_{i,j} = 
\left\{
\begin{array}{ll}
E_{i+1,j+1} \text{~}\text{~}\text{if} ~j \neq n+1 \\
E_{1,n+1} \text{~}\text{~}\text{~}\text{~}\text{if} ~i=1,~j = n+1 \\
E_{i,j}'\text{~}\text{~}\text{~}\text{~}\text{~}\text{~}\text{~}\text{~}\text{if}~1 <i,~j=n+1.
\end{array}
\right.
\end{equation}

Thus, some of the equations that determine the component $X_{W(\widetilde{A}_n)}[0]$ are just swapped, namely the $E_{i,j}$ with $j  < n+1$, and $E_{1,n+1}$. The other ones are sent to some equations that are not in the set of equations cutting out $X_{W(\widetilde{A}_n)}[0]$. 

However $(1~2~\cdots~n+1)$ acts as an homeomorphism on $Y$. Thus the set ${(1~2~\cdots~n+1)\diamond X_{W(\widetilde{A}_n)}[0]}$ must be an irreducible component of $X_{W(\widetilde{A}_n)}$. Thanks to (\ref{action equations}) we see that $0 \in  (1~2~\cdots~n+1)\diamond X_{W(\widetilde{A}_n)}[0]$. Since there is an only one component in $\widehat{X}_{W(\widetilde{A}_n)}$ that contains 0, namely $X_{W(\widetilde{A}_n)}[0]$, it follows that $(1~2~\cdots~n+1)\diamond X_{W(\widetilde{A}_n)}[0]= X_{W(\widetilde{A}_n)}[0].$
\end{proof}

\medskip

We are now ready to prove our main result.

\medskip

\begin{proof}[Proof of Theorem \ref{Bijection entre pyras et permutations}]
 It is well known that in the symmetric group $W(A_n)$ the stabilizer of any circular permutation for the conjugation action is the subgroup generated by this permutation. More precisely, let $\sigma$ be a circular permutation of $W(A_n)$. Then $Stab_{W(A_n)}(\sigma) = \langle \sigma \rangle$. It is also clear that this action acts transitively on the circular permutations. Let us consider now the action of $W(\widetilde{A}_n)$ on $H^0(\widehat{X}_{W(\widetilde{A}_n)})$. Since ${W(\widetilde{A}_n) = \mathbb{Z}\Phi \rtimes W(A_n)}$ we also have $W(A_n)$ that acts on $H^0(\widehat{X}_{W(\widetilde{A}_n)})$.  Moreover, because of  Lemma \ref{stab diamond} we know that the circular permutation $(1~2~\cdots~n+1)$ of $W(A_n)$ is such that  $(1~2~\cdots~n+1)\diamond X_{W(\widetilde{A}_n)}[0] = X_{W(\widetilde{A}_n)}[0]$. Then, the subgroup generated by $(1~2~\cdots~n+1)$ is included in $Stab_{\diamond}(X_{W(\widetilde{A}_n)}[0])$. Besides, by Proposition \ref{action isom} we know that the action $\diamond$ is transitive. Theorem 4.3 of \cite{NC1} tells us that $|H^0(\widehat{X}_{W(\widetilde{A}_n)})|=n!$. Thus it follows
$$
\left | \faktor{W(A_n)}{ Stab_{\diamond}(X_{W(\widetilde{A}_n)}[0])} \right |= |H^0(\widehat{X}_{W(\widetilde{A}_n)})|=n!.
$$
Then, one has $|Stab_{\diamond}(X_{W(\widetilde{A}_n)}[0])| =  n+1$ and it follows 
$$
Stab_{\diamond}(X_{W(\widetilde{A}_n)}[0]) = \langle (1~2~\cdots~n+1) \rangle.
$$
Therefore, the map given by $ (1~2~\cdots~n+1) \mapsto X_{W(\widetilde{A}_n)}[0]$ induces a bijective map defined as follows
$$
\begin{array}{cccc}
\{\text{Circular permutations of $W(A_n)$}\}  & \longrightarrow & H^0(\widehat{X}_{W(\widetilde{A}_n)}) \\
       \sigma(1~2~\cdots~n+1)\sigma^{-1}  & \longmapsto     &  \sigma\diamond X_{W(\widetilde{A}_n)}[0].
\end{array}
$$
\end{proof}

In Figures \ref{position pyra}, \ref{image entre posets 1}, \ref{poset pyra circ A4} and \ref{poset permutation circ A4} we describe the two posets respectively associated to $W(\widetilde{A}_3)$ and $W(\widetilde{A}_4)$.

When we express a vector $v \in \bigoplus_{\alpha \in A_{n}^+}\mathbb{R}\alpha$ as a row vector, or column vector, the way to proceed is by adding, from left to right and from bottom to top, the diagonals of the triangle. Therefore, the coefficient $v_{ij}$ in the below triangle is the coordinate of $v$ in position $\alpha = e_i-e_j$. For example in Figure \ref{position pyra} this expression is as follows: $v = (v_{12}, v_{13}, v_{14}, v_{23}, v_{24}, v_{34})$.

\begin{figure}[h!]
\centering
\includegraphics[scale=0.8]{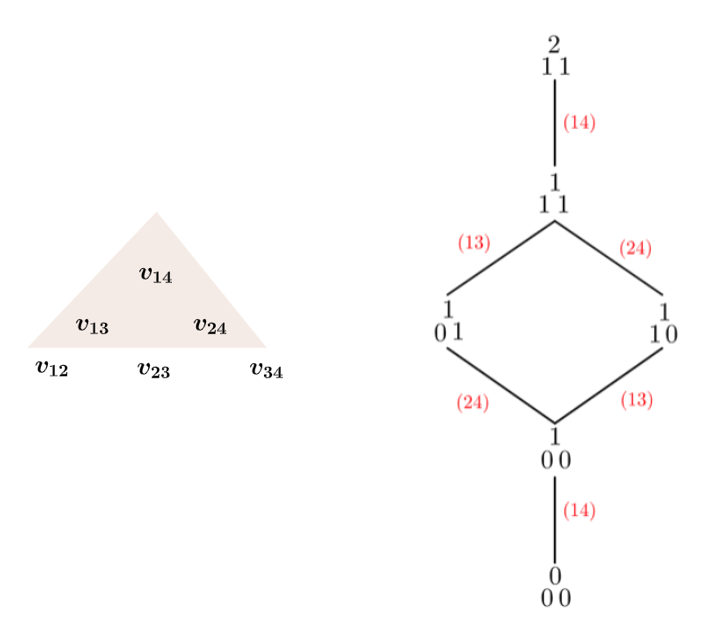} 
\caption{Poset associated to $\widehat{X}_{W(\widetilde{A}_3)}$. The coordinates on the simple roots are erased since they are all equal to 0. The red labels represent the natural order on $\mathbb{Z}^6$, that is the red label on an edge indicates the positive root whose coordinate is increased when going up the edge.} 
\label{position pyra}
\end{figure}
\bigskip
\vspace{2cm}
\bigskip
\begin{figure}[h!]
\centering
\includegraphics[scale=0.8]{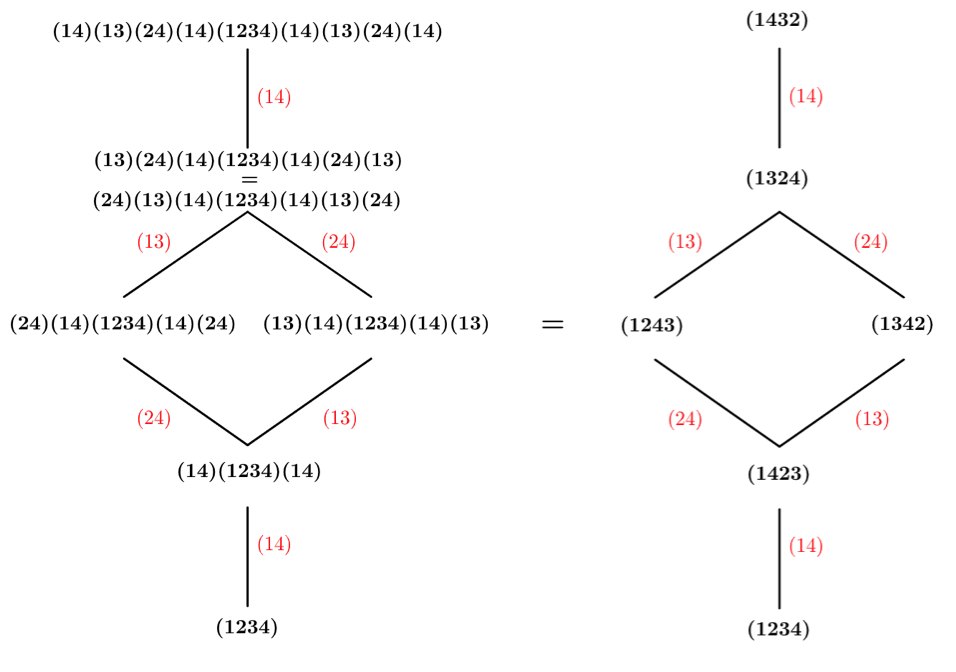} 
\caption{Poset of admitted vectors of $W(\widetilde{A}_3)$ on the left hand side and circular permutations of $W(A_3)$ on the right hand side. The red labels indicate the cover relation which is the conjugation action.} 
\label{image entre posets 1}
\end{figure}

\newpage

\begin{figure}[h!]
\centering
\includegraphics[scale=0.8]{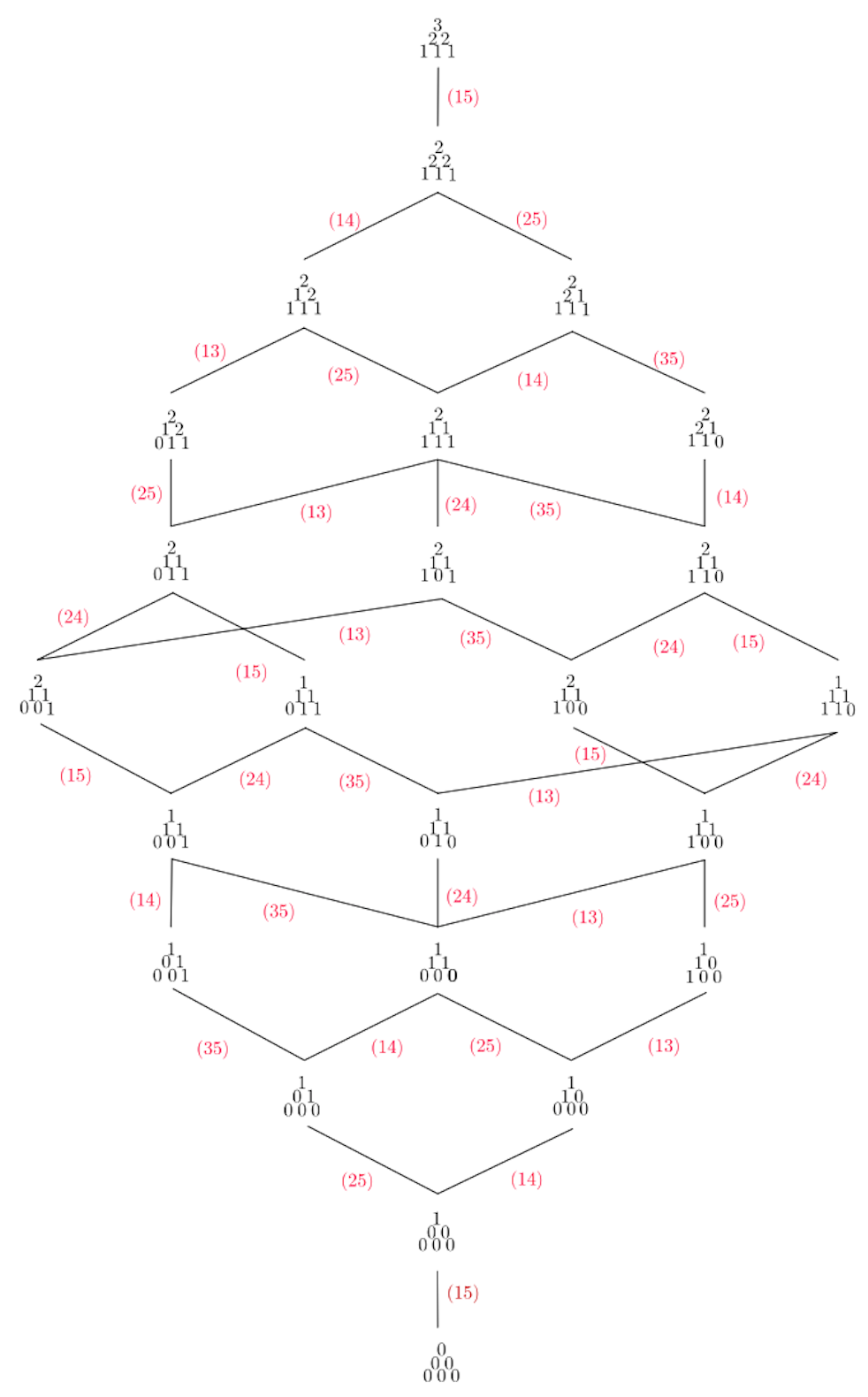} 
\caption{Poset of admitted vectors of $W(\widetilde{A}_4)$. The four coordinates on the simple roots are erased since they are all equal to 0. The red labels represent the natural order on $\mathbb{Z}^{10}$, that is the red label on an edge indicates the positive root whose coordinate is increased when going up the edge.}
\label{poset pyra circ A4}
\end{figure}

\newpage

\begin{figure}[h!]
\centering
\includegraphics[scale=0.88]{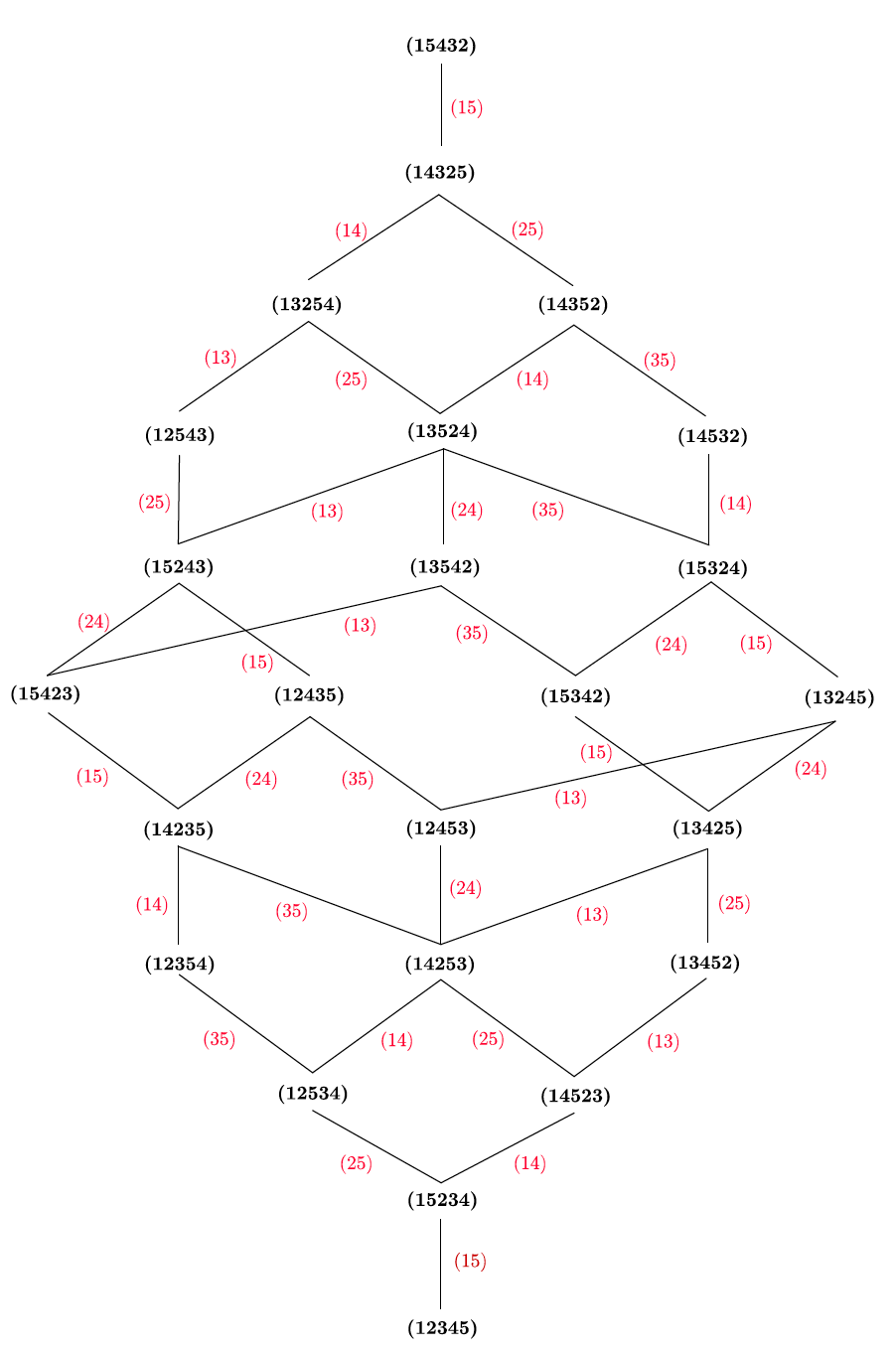} 
\caption{Poset of circular permutations of $W(\widetilde{A}_4)$. The red labels indicate the cover relation: this is the conjugation of the lower circular permutation by the red label.}
\label{poset permutation circ A4}
\end{figure}

\begin{remark}
In \cite{abram2020order} we thoroughly investigated the structure of $H^0(\widehat{X}_{W(\widetilde{A}_n)})$ from another combinatorial point of view, and we gave a deeper explanation of the above bijection. We have shown in particular that this poset map is an isomophism of posets. We also showed that these posets are semidistributive lattices and we provided a way to compute the join of any pair of elements.
\end{remark}

\section{Investigation of the $\diamond$ action on $H^0(\widehat{X}_{W(\widetilde{A}_n)})$}

\subsection{Motivation and example}\label{motivation}

In \cite{NC1} a concrete description of the coefficients $k(w,\alpha)$ for $\alpha \in \Phi^+\setminus \Delta$ can be found in any type in terms of the coefficients $k(w,\delta)$ with $\delta \in \Delta$. We  use here this description in type $A$.

 Let $w \in W(\widetilde{A}_n)$ and let $1 \leq i < j \leq n+1$. We denote  $k(w, \alpha) = k_{i,j}(w)$ where $\alpha = e_i - e_j$ is a positive root of $A_n$. According to Theorem 4.1 and Lemma 4.1 of \cite{NC1}, there exists a positive integer $\lambda_{i,j}$ such that the coefficient $k_{i,j}(w)$ is given by the formula: 
$$
k_{i,j}(w) = \sum\limits_{r=i}^{j-1} k_{r,r+1}(w)+ \lambda_{i,j}.
$$

It is precisely from these formulas that the Shi variety $\widehat{X}_{W(\widetilde{A}_n)}$ is defined in \cite{NC1}. Therefore, if $\lambda=(\lambda_{i,j})$ is an admitted vector and if $x=(x_{i,j}) \in X_{W(\widetilde{A}_n)}[\lambda]$, because of the construction of the irreducible components we must have for all $1 \leq i < j \leq n+1$ that: 
\begin{equation}\label{eq var x}
x_{i,j} = \sum\limits_{r=i}^{j-1}x_{r,r+1}+\lambda_{i,j}.
\end{equation}

Since $W(\widetilde{A}_n) \simeq \mathbb{Z}A_n \rtimes W(A_n)$, one can express $w$ as $w=\tau_u \overline{w}$ where $u \in \mathbb{Z}A_n$ and $\overline{w} \in W(A_n)$. The goal of this section is to understand, in a practical way, the following action
$$
\begin{array}{ccc}
 W(\widetilde{A}_n) \times H^0(\widehat{X}_{W(\widetilde{A}_n)})& \longrightarrow & H^0(\widehat{X}_{W(\widetilde{A}_n)}) \\
 (w, X_{W(\widetilde{A}_n)}[\lambda]) & \longmapsto & w \diamond X_{W(\widetilde{A}_n)}[\lambda].
\end{array}
$$

 It is of particular interest to provide convenient formulas for the action $\diamond$ in order to better understand the poset structure, and more specially its cover relation.

We know that the irreducible components of $\widehat{X}_{W(\widetilde{A}_n)}$ are parameterized by the admitted vectors of $W(\widetilde{A}_n)$ (see \cite{NC1} Theorem 4.3). Therefore it is equivalent to look at this action on the set of admitted vectors. Let $\lambda$ be an admitted vector. Because of Theorem \ref{action isom} we know that the action by translation has no effect, that is $w \diamond \lambda = \overline{w} \diamond \lambda$. Thus, it is enough to understand this action restricted to $W(A_n)$.

Let us then take $g \in W(A_n)$ and let us denote $\beta = g \diamond \lambda$. Our purpose is to express $\beta$ in terms of $\lambda$. The way to proceed is as follows.
 $W(A_n)$ acts on the integral points of $X_{W(\widetilde{A}_n)}[\lambda]$ via the $\Phi^+$-representation. Hence, in order to determine the admitted vector $\beta$, we just have to find which component contains the element $F(g)(x)$. 
 
Recall that $\{e_{i,j}~|~1\leq i<j \leq n+1 \}$ is a basis of $Y$. Therefore one has
 $$
 F(g)(x) = F(g)(\sum\limits_{i<j}x_{i,j}e_{i,j}) =F(g)(\sum\limits_{i<j}(\sum\limits_{r=i}^{j-1}x_{r,r+1}+\lambda_{i,j})e_{i,j}).
 $$ 
 Finally, the goal is to find the expression of $\beta$ in terms of $\lambda$ from the equation:
 \begin{equation*}
 F(g)(\sum\limits_{i<j}(\sum\limits_{r=i}^{j-1}x_{r,r+1}+\lambda_{i,j})e_{i,j}) = \sum\limits_{i<j}(\sum\limits_{r=i}^{j-1}y_{r,r+1}+\beta_{i,j})e_{i,j}.
 \end{equation*}

\bigskip

\begin{example}
Let us take the group $W(A_3)$. Because of Theorem \ref{Bijection entre pyras et permutations} we know that $\widehat{X}_{W(\widetilde{A}_3)}$ has 6 irreducible components. We think of these components as admitted vectors and we delete the $\Delta$-part since these coordinates are zero. 

Using (\ref{condition adm}), a short computation shows that these vectors (with positions $[\lambda_{13},\lambda_{14}, \lambda_{24}]$) are $$\{ [0,0,0], [0,1,0], [0,1,1], [1,1,1], [1,1,0], [1,2,1] \}.$$ 

Let $g=s_{1,2} \in W(A_3)$ be the reflection corresponding to the positive root $e_1-e_2$, $\lambda$ be an admitted vector and $x \in X_{W(\widetilde{A}_3)}[\lambda]$. Let  $\beta=(\beta_{i,j})$ be the admitted vector such that $F(g)(x) \in X_{W(\widetilde{A}_3)}[\beta]$. Because of (\ref{matrix F}), (\ref{coeff affine F}) and (\ref{eq var x}) it is easy to see that the matrix representation of the affine map $F(g)$. The next computation gives the expression of $F(g)(x)$:

  \begin{footnotesize}
$$
\begin{pmatrix} 
    -1  & 0  & 0 & 0  & 0 & 0  \\
     0  & 0  & 0 & 1  & 0  & 0  \\
     0  & 0  & 0 & 0  & 1  &  0  \\
     0  & 1  & 0 & 0  & 0  &  0  \\
     0  & 0  & 1 & 0  & 0  &  0  \\
     0  & 0  & 0 & 0  & 0  &  1  \\
\end{pmatrix}
\begin{pmatrix}
x_{12} \\
 x_{12}+x_{23}+\lambda_{13} \\
 x_{12}+x_{23}+x_{34}+\lambda_{14} \\
x_{23} \\ 
x_{23}+x_{34}+\lambda_{24}   \\
x_{34}\\
\end{pmatrix}
+
\begin{pmatrix}
-1 \\
 0 \\
0 \\ 
0 \\
0\\
 0 \\ 
\end{pmatrix}
=
\begin{pmatrix}
 -x_{12}-1\\
 x_{23} \\
x_{23}+x_{34}+\lambda_{24} \\ 
  x_{12}+x_{23}+\lambda_{13}\\
 x_{12}+x_{23}+x_{34}+\lambda_{14}\\
 x_{34} \\ 
\end{pmatrix}.
$$
 \end{footnotesize}
 
Let us denote $F(g)(x) = y$. Once again because of (\ref{eq var x}) it follows that

 \begin{footnotesize}
$$
\begin{pmatrix}
 -x_{12}-1\\
 x_{23} \\
x_{23}+x_{34}+\lambda_{24} \\ 
  x_{12}+x_{23}+\lambda_{13}\\
 x_{12}+x_{23}+x_{34}+\lambda_{14}\\
 x_{34} \\ 
\end{pmatrix}
=
\begin{pmatrix}
y_{12} \\
 y_{12} + y_{23}+ \beta_{13} \\
y_{12}+ y_{23}+ y_{34}+ \beta_{14}\\ 
y_{23}\\
y_{23} +y_{34}+ \beta_{24}\\
 y_{34} \\ 
\end{pmatrix}.
$$
 \end{footnotesize}
 
Thus one has
$$
 \left\{
\begin{array}{ll}
y_{12} = -x_{12}-1  \\
y_{23}=x_{12}+x_{23}+\lambda_{13}\\
y_{34}=x_{34}
\end{array}
\right. 
~~~~\text{and}~~~~~
 \left\{
\begin{array}{ll}
x_{23} =  y_{12} + y_{23}+ \beta_{13} \\
x_{23}+x_{34}+\lambda_{24}=y_{12}+ y_{23}+ y_{34}+ \beta_{14}\\
x_{12}+x_{23}+x_{34}+\lambda_{14}=y_{23} +y_{34}+ \beta_{24}.
\end{array}
\right. 
$$
It follows that
$$
 \left\{
\begin{array}{ll}
\beta_{13} = x_{23} -(-x_{12}-1) -(x_{12}+x_{23}+\lambda_{13})= -\lambda_{13}+1\\
\beta_{14} = x_{23}+x_{34}+\lambda_{24}-(-x_{12}-1) -(x_{12}+x_{23}+\lambda_{13})-x_{34}=-\lambda_{13}+\lambda_{24}+1 \\
 \beta_{24}= x_{12}+x_{23}+x_{34}+\lambda_{14} -(x_{12}+x_{23}+\lambda_{13})-x_{34}=-\lambda_{13}+\lambda_{14}.
\end{array}
\right. 
$$

Finally one has 
$$g \diamond [\lambda_{13}, \lambda_{14}, \lambda_{24}] = [-\lambda_{13}+1, -\lambda_{13}+\lambda_{24}+1, -\lambda_{13}+\lambda_{14}].$$

Doing these computations for all the simple reflections we obtain the data in Table \ref{tableau action}:

\medskip

\begin{table}[h!]
\caption{Action of the simple reflections of $W(A_3)$ onto the set of admitted vectors.}
\begin{center}
\begin{tabular}{c|c|c|c|c|c|c}
                   &       [0,0,0]       &       [0,1,0]     &     [0,1,1]     &    [1,1,1]    &   [1,1,0]   &    [1,2,1]  \\
 \hline
 $s_{1,2}$     &      [1,1,0]        &       [1,1,1]     &     [1,2,1]     &    [0,1,0]    &   [0,0,0]   &    [0,1,1]    \\
 \hline
$s_{2,3}$      &   	 [1,1,1]	      &      [1,2,1]		&	   [1,1,0]     &    [0,0,0]    &   [0,1,1]   &    [0,1,0]   \\
\hline
$s_{3,4}$     &    	 [0,1,1]       &      [1,1,1]		&     [0,0,0]    &    [0,1,0]    &   [1,2,1]    &   [1,1,0]    \\
\end{tabular}
\end{center}
\label{tableau action}
\end{table}

\end{example}

\bigskip

\subsection{Computation of the $\diamond$ action}
The goal of this section is to give an explicit formula of the $\diamond$ action, that is we want to express the component $w \diamond X_{W(\widetilde{A}_n)}[\lambda]$ in terms of $X_{W(\widetilde{A}_n)}[\lambda]$. As mentioned in Section \ref{motivation}, we approach this question using the action on the admitted vectors instead of the components, in other words we want to express $w \diamond \lambda$ in terms of $\lambda$. In Proposition \ref{changement coordonnees} we give the formula of this action for $w=s_{k,\ell}$. There is no real difficulty to extend it for each element of $W$.

We keep the presentation of the root system $A_n$ as a triangle with base $\Delta$. Let $s_{k,\ell} \in W(A_n)$. Notice that the inversion set of $s_{k,\ell}$ is easy to express, indeed 
$$
 N(s_{k,\ell})=\{ e_k-e_{k+1}, e_k-e_{k+2},\cdots,e_k-e_{\ell}, e_{k+1}-e_{\ell}, e_{k+2}-e_{\ell},\cdots, e_{\ell-1}-e_{\ell} \}.
$$

Let $\lambda \in Y$ (the definition of $Y$ is given in Section \ref{notations}). We denote by $\lambda_{k,\ell}$ the coordinate in position $e_{k,\ell}$ of $\lambda$. See for example Figure \ref{pyra}.

We define now the following sets that will help us to understand how the action $\odot$ behaves.

\begin{definition}
Let $s_{k,\ell}$ be a reflection of $W(A_n)$. We denote 
 \begin{align*}
 A_{i,j}(k, \ell) &= \{ p \in \{i,\cdots,j-1\}~ | ~e_p-e_{p+1} \notin N(s_{k,\ell}) \},
\end{align*}
 $$B_{i,j}(k, \ell) = \{ p \in \{i,\cdots,j-1\}~ | ~ e_p-e_{p+1}\in N(s_{k,\ell}) \}.$$
\end{definition}

\begin{figure}[h!]
\centering
\includegraphics[scale=0.45]{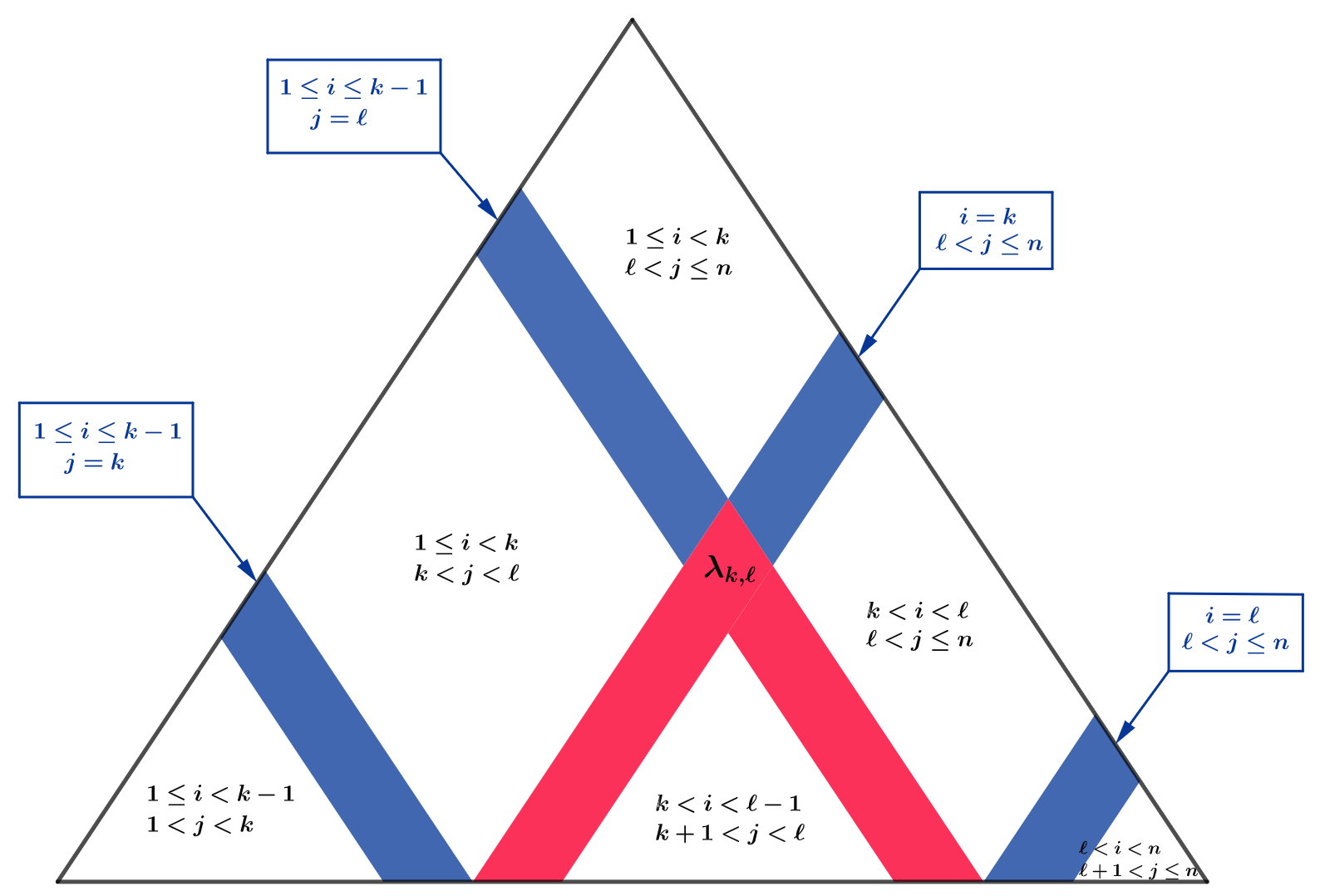} 
\caption{Those are the different regions of the root system $A_{n-1}$ cut out by the action of $s_{k,\ell}$. The red part is $N(s_{k,\ell})$. The blue part is the set of positive roots that are nontrivially permuted by $s_{k,\ell}$; the remaining positive roots are fixed.}
\label{pyra}
\end{figure}

\begin{definition}\label{integral}
 Let $w \in W(A_n)$. We define the function $\gamma_w$ by 
 $$
 \begin{array}{ccccc}
 \gamma_w & : & \widehat{X}_{W(\widetilde{A}_n)} \times [1,n+1] & \longrightarrow & \mathbb{Z} \\
				  &   &      (x,v)              & \longmapsto      & x_{w(\lfloor v \rfloor), w(\lfloor v+1 \rfloor)}.
 \end{array} 
 $$
 If $w=id$ we denote $\gamma_w$ by $\gamma$.
\end{definition}

\begin{lemma}\label{diff int}
Let $x \in X_{W(\widetilde{A}_n)}[\lambda]$, $t:=s_{k,\ell}$, $e_i - e_j \in \Phi^+$, and let us denote $y:=F(t)(x)$. Then we have the formula:
$$
\int\limits_{t(i)}^{t(j)} \gamma(x,v)dv - \int\limits_{i}^j\gamma(y,v)dv =-\int\limits_{i}^j\gamma_t(\lambda,v)dv + |B_{i,j}| .
$$
\end{lemma}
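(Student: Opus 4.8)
The plan is to reduce all three integrals appearing in the statement to the single quantity $\int_i^j \gamma_t(x,v)\,dv = \sum_{p=i}^{j-1} x_{t(p),t(p+1)}$ (using the convention $x_{a,b}=-x_{b,a}$), and then subtract. Throughout I write $t=s_{k,\ell}$ and $B=B_{i,j}(k,\ell)$, and I use that a simple root $e_p-e_{p+1}$ lies in $N(t)$ precisely when $p\in B$, i.e. when $p=k$ or $p=\ell-1$.

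First I would handle $\int_i^j\gamma(y,v)\,dv$. Since $\gamma(y,v)=y_{p,p+1}$ on each interval $[p,p+1)$, this integral equals $\sum_{p=i}^{j-1}y_{p,p+1}$. To compute $y_{p,p+1}$ I use the affine description of $F(t)$ coming from (\ref{matrix F}) and (\ref{coeff affine F}) (equivalently, from Corollary \ref{action sur reflexion} transported through $\Theta$): the linear part sends $e_{a,b}\mapsto e_{t(a),t(b)}$ and the translation part is $-\sum_{e_r-e_s\in N(t)}e_{r,s}$. Reading off the coordinate in position $e_{p,p+1}$, where the linear contribution comes from the index pair $\{a,b\}=\{t(p),t(p+1)\}$, and using antisymmetry of $x$, gives $y_{p,p+1}=x_{t(p),t(p+1)}-[\,p\in B\,]$. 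Summing over $p$ yields $\int_i^j\gamma(y,v)\,dv=\int_i^j\gamma_t(x,v)\,dv-|B_{i,j}|$.

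Next I would treat $\int_{t(i)}^{t(j)}\gamma(x,v)\,dv$. By additivity of the signed integral this telescopes as $\sum_{p=i}^{j-1}\int_{t(p)}^{t(p+1)}\gamma(x,v)\,dv$, regardless of whether the endpoints $t(p)$ are increasing. The key \emph{per-step identity} I would establish is that for every $p$ one has $\int_{t(p)}^{t(p+1)}\gamma(x,v)\,dv=x_{t(p),t(p+1)}-\lambda_{t(p),t(p+1)}$. When $t(p)<t(p+1)$ the left side is $\sum_{r=t(p)}^{t(p+1)-1}x_{r,r+1}$, and the defining relation (\ref{eq var x}) of $X_{W(\widetilde{A}_n)}[\lambda]$ rewrites this exactly as $x_{t(p),t(p+1)}-\lambda_{t(p),t(p+1)}$. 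When $t(p)>t(p+1)$ I flip the integration limits, apply (\ref{eq var x}) to the pair $(t(p+1),t(p))$, and then invoke antisymmetry of both $x$ and $\lambda$; the same identity falls out. Summing over $p$ then gives $\int_{t(i)}^{t(j)}\gamma(x,v)\,dv=\int_i^j\gamma_t(x,v)\,dv-\int_i^j\gamma_t(\lambda,v)\,dv$.

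Finally, subtracting the two resulting expressions cancels the common term $\int_i^j\gamma_t(x,v)\,dv$ and leaves exactly $-\int_i^j\gamma_t(\lambda,v)\,dv+|B_{i,j}|$, which is the asserted formula. The only delicate point, and the step I expect to be the main obstacle, is the per-step identity inside the telescoping sum: the case $t(p)>t(p+1)$ forces reversed integration limits together with antisymmetric reindexing of both $x$ and $\lambda$, and one must check that the signs conspire so that the clean expression $x_{t(p),t(p+1)}-\lambda_{t(p),t(p+1)}$ is recovered uniformly in both cases. Once this uniform per-step identity is secured, everything else is routine bookkeeping.
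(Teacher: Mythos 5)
Your proof is correct and takes essentially the same route as the paper: both arguments compute the coordinates $y_{p,p+1}$ from the affine description of $F(t)$ combined with the component equations (\ref{eq var x}) and the antisymmetry conventions, and then cancel all $x$-dependent terms by additivity of the oriented integral over the unit steps $p=i,\dots,j-1$. Your repackaging --- reducing both integrals to $\sum_{p} x_{t(p),t(p+1)}$ via the uniform per-step identity, whose reversed-limits case does work out exactly as you anticipate --- is only a cosmetic rearrangement of the paper's cancellation step (\ref{int vs sum}).
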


\begin{proof}
From the definition of $F(t)$ we have that 
$$
  y_{p,p+1}= \left\{
\begin{array}{ll}
 \sum\limits_{r=t(p)}^{t(p+1)-1}x_{r,r+1} + \lambda_{t(p),t(p+1)}~~\text{if}~~p \in A_{i,j} \\
 -\sum\limits_{r=t(p+1)}^{t(p)-1}x_{r,r+1} - \lambda_{t(p+1),t(p)}-1~~\text{if}~~ p \in B_{i,j}.
\end{array}
\right. 
$$
Therefore, using Definition \ref{integral} and the fact that $\lambda_{a,b}=-\lambda_{b,a}$, one can express $y_{p,p+1}$ as follows:
$$
  y_{p,p+1}= \left\{
\begin{array}{ll}
\displaystyle\int\limits_{t(p)}^{t(p+1)}\gamma(x,v)dv + \lambda_{t(p),t(p+1)}~~\text{if}~~p \in A_{i,j} \\
\displaystyle\int\limits_{t(p)}^{t(p+1)}\gamma(x,v)dv +\lambda_{t(p),t(p+1)}-1~~\text{if}~~ p \in B_{i,j}.
\end{array}
\right. 
$$
Consequently it follows that 
\begin{align}\label{diff int calcul}
&~~~~\int\limits_{t(i)}^{t(j)} \gamma(x,v)dv - \int\limits_{i}^j\gamma(y,v)dv \nonumber\\
&= \int\limits_{t(i)}^{t(j)} \gamma(x,v)dv - \sum\limits_{p=i}^{j-1}y_{p,p+1} \nonumber \\
& = \int\limits_{t(i)}^{t(j)} \gamma(x,v)dv - \sum\limits_{p \in A_{i,j}}(\int\limits_{t(p)}^{t(p+1)}\gamma(x,v)dv + \lambda_{t(p),t(p+1)}) - \sum\limits_{p \in B_{i,j}}(\int\limits_{t(p)}^{t(p+1)}\gamma(x,v)dv +\lambda_{t(p),t(p+1)}-1)\nonumber \\
& = \int\limits_{t(i)}^{t(j)} \gamma(x,v)dv - \sum\limits_{p \in A_{i,j}}\int\limits_{t(p)}^{t(p+1)}\gamma(x,v)dv - \sum\limits_{p\in A_{i,j}} \lambda_{t(p),t(p+1)}- \sum\limits_{p \in B_{i,j}}\int\limits_{t(p)}^{t(p+1)}\gamma(x,v)dv  \\ 
&~~-  \sum\limits_{p \in B_{i,j}}\lambda_{t(p),t(p+1)} + |B_{i,j}|\nonumber.
\end{align}

However, since $A_{i,j} \sqcup B_{i,j} = \{1,\dots ,j-1\}$ it is clear that
\begin{equation}\label{int vs sum}
 \int\limits_{t(i)}^{t(j)} \gamma(x,v)dv = \sum\limits_{p \in A_{i,j}}\int\limits_{t(p)}^{t(p+1)}\gamma(x,v)dv + \sum\limits_{p \in B_{i,j}}\int\limits_{t(p)}^{t(p+1)}\gamma(x,v)dv.
\end{equation}

Therefore, using (\ref{int vs sum}) in (\ref{diff int}) it follows that 
\begin{align*}
\int\limits_{t(i)}^{t(j)} \gamma(x,v)dv - \int\limits_{i}^j\gamma(y,v)dv & = -\sum\limits_{p\in A_{i,j}} \lambda_{t(p),t(p+1)}-  \sum\limits_{p \in B_{i,j}}\lambda_{t(p),t(p+1)} + |B_{i,j}| \\
& = - \sum\limits_{p=i}^{j-1}\lambda_{t(p),t(p+1)} + |B_{i,j}| \\
& = - \int\limits_{i}^j\gamma_t(\lambda, v)dv + |B_{i,j}|.
\end{align*}
\end{proof}

\begin{theorem} \label{changement coordonnees}
Let $\lambda$ be an admitted vector, $\alpha :=e_i - e_j$, $t:= s_{k,\ell}$, $A_{i,j} := A_{i,j}(k,\ell)$, $B_{i,j} := B_{i,j}(k,\ell)$. Then we have the formulas:
$$
(t \diamond \lambda)_{i,j} = \left\{
\begin{array}{ll}
\lambda_{t(i),t(j)} - \displaystyle\int\limits_{i}^j\gamma_t(\lambda, v)dv + |B_{i,j}|~~~~~~~~\text{if}~~\alpha \notin N(t) \\
\lambda_{t(i),t(j)} - \displaystyle\int\limits_{i}^j\gamma_t(\lambda, v)dv + |B_{i,j}|-1~~~\text{if}~~\alpha \in N(t).
\end{array}
\right.
$$
\end{theorem}

\begin{proof}
Let $x =(x_{i,j}) \in X_{W_a}[\lambda]$ with $\Theta(x)= \sum\limits_{i<j}x_{i,j}e_i \wedge e_j$, $\Theta(\lambda)= \sum\limits_{i<j}\lambda_{i,j}e_i \wedge e_j$, and $F(t)(x):=y$. Since $W_a$ acts on the components, it is enough to see where goes $x$ under this action. Let us denote by $\beta=(\beta_{i,j})$ the admitted vector such that $F(t)(x) \in X_{W(\widetilde{A}_n)}[\beta]$. The goal it then to express $\beta$ in terms of $\lambda$.

  This question is exactly the same as understanding the component $\Theta(X_{W(\widetilde{A}_n)}[\beta])$ in  $\bigwedge\nolimits^2(K)$. We will answer this question with the second point of view. We adopt the convention that for $j> i$, $x_{j,i} = -x_{i,j}$. These relations also apply to $\lambda$. 
  
  From Definition \ref{def action} and Corollary \ref{action sur reflexion} we have
\begin{align*}
\varphi_t ( \sum\limits_{ i < j } x_{i,j}e_i \wedge e_j) & = t \odot ( \sum\limits_{ i < j } x_{i,j}e_i \wedge e_j) \\
																		   & = \sum\limits_{ i < j } x_{i,j}e_{t(i)} \wedge e_{t(j)} - \sum\limits_{e_i-e_j \in N(t)}e_i \wedge e_j \\
																		   & =  \sum\limits_{ e_i-e_j \in N(t) } x_{i,j}e_{t(i)} \wedge e_{t(j)} +  \sum\limits_{ e_i-e_j \notin N(t) } x_{i,j}e_{t(i)} \wedge e_{t(j)} -  \sum\limits_{e_i-e_j \in N(t)}e_i \wedge e_j  \\
& = \sum\limits_{ e_i-e_j \in N(t) } x_{t(i),t(j)}e_i \wedge e_j +  \sum\limits_{ e_i-e_j \notin N(t) } x_{t(i),t(j)}e_i \wedge e_j -  \sum\limits_{e_i-e_j \in N(t)}e_i \wedge e_j \\
& =  \sum\limits_{ e_i-e_j \notin N(t) } x_{t(i),t(j)}e_i \wedge e_j + \sum\limits_{e_i-e_j \in N(t)} (x_{t(i),t(j)}-1) e_i \wedge e_j.
\end{align*}

Since $ \Theta \circ F(t) = \varphi_t \circ \Theta$ we obtain that

\begin{align*}
 & \text{~} \text{~}  \text{~} \text{~}  \Theta \circ F(t)(x) \\&= \sum\limits_{e_i-e_j \notin N(t)} x_{t(i),t(j)}  e_i \wedge e_j  + \sum\limits_{e_i-e_j \in N(t)} (x_{t(i),t(j)}-1) e_i \wedge e_j \\
&=  \sum\limits_{e_i-e_j \notin N(t)} ( \sum\limits_{r=t(i)}^{t(j)-1}x_{r,r+1}+\lambda_{t(i),t(j)} )    e_i \wedge e_j  -\sum\limits_{e_i-e_j \in N(t)} (\sum\limits_{r=t(j)}^{t(i)-1}x_{r,r+1}+\lambda_{t(j),t(i)}+1) e_i \wedge e_j \\
&=\sum\limits_{e_i-e_j \notin N(t)}(\int\limits_{t(i)}^{t(j)}\gamma(x,v)dv + \lambda_{t(i),t(j)})e_i \wedge e_j + \sum\limits_{e_i-e_j \in N(t)}(\int\limits_{t(i)}^{t(j)}\gamma(x,v)dv + \lambda_{t(i),t(j)}-1)e_i \wedge e_j.
\end{align*}

Moreover, we must have  
\begin{align*}
 \Theta \circ F(t)(x) & = \sum\limits_{e_i-e_j \notin N(t)} y_{i,j} e_i \wedge e_j  + \sum\limits_{e_i-e_j \in N(t)} y_{i,j} e_i \wedge e_j  \\
 & =  \sum\limits_{e_i-e_j \notin N(t)}( \sum\limits_{r=i}^{j-1}y_{r,r+1}+\beta_{i,j}) e_i \wedge e_j  + \sum\limits_{e_i-e_j \in N(t)} (\sum\limits_{r=i}^{j-1}y_{r,r+1}+\beta_{i,j}) e_i \wedge e_j \\
& = \sum\limits_{e_i-e_j \notin N(t)}( \int\limits_{i}^{j}\gamma(y,v)dv+ \beta_{i,j})e_i \wedge e_j + \sum\limits_{e_i-e_j \in N(t)}( \int\limits_{i}^{j}\gamma(y,v)dv+\beta_{i,j})e_i \wedge e_j.
\end{align*}

Hence we have
$$
\int\limits_{i}^{j}\gamma(y,v)dv+ \beta_{i,j} =
 \left\{
\begin{array}{ll}
\displaystyle\int\limits_{t(i)}^{t(j)}\gamma(x,v)dv + \lambda_{t(i),t(j)}~~~~~~~~\text{if}~~e_i-e_j \notin N(t) \\
\displaystyle\int\limits_{t(i)}^{t(j)}\gamma(x,v)dv + \lambda_{t(i),t(j)}-1~~~\text{if}~~e_i-e_j \in N(t),
\end{array}
\right.
$$
which is equivalent to
$$
 \beta_{i,j} =
 \left\{
\begin{array}{ll}
\lambda_{t(i),t(j)}+\displaystyle\int\limits_{t(i)}^{t(j)}\gamma(x,v)dv - \int\limits_{i}^{j}\gamma(y,v)dv ~~~~~~~~\text{if}~~e_i-e_j \notin N(t) \\
\lambda_{t(i),t(j)}+\displaystyle\int\limits_{t(i)}^{t(j)}\gamma(x,v)dv -\int\limits_{i}^{j}\gamma(y,v)dv -1~~~\text{if}~~e_i-e_j \in N(t).
\end{array}
\right.
$$

Moreover, because of Lemma \ref{diff int} we know that
$$
\int\limits_{t(i)}^{t(j)} \gamma(x,v)dv - \int\limits_{i}^j\gamma(y,v)dv =-\int\limits_{i}^j\gamma_t(\lambda,v)dv + |B_{i,j}| .
$$
Thus it follows that
$$
 \beta_{i,j} =
 \left\{
\begin{array}{ll}
\lambda_{t(i),t(j)}-\displaystyle\int\limits_{i}^j\gamma_t(\lambda,v)dv + |B_{i,j}| ~~~~~~~~\text{if}~~e_i-e_j \notin N(t) \\
\lambda_{t(i),t(j)}-\displaystyle\int\limits_{i}^j\gamma_t(\lambda,v)dv + |B_{i,j}| -1~~~\text{if}~~e_i-e_j \in N(t).
\end{array}
\right.
$$
This ends the proof since $ \beta_{i,j} = (t\diamond \lambda)_{i,j}$.
\end{proof}

\bigskip

\textbf{Acknowledgements}. We thank Christophe Hohlweg and Hugh Thomas for answering many questions and providing many helpful comments that help us to improve this paper. The author is also grateful to Christophe Reutenauer and Antoine Abram for valuable discussions.  We also thank the referees for useful suggestions. 

This work was partially supported by NSERC grants and by the LACIM.

\nocite{*}
\bibliographystyle{plain}
\bibliography{composante.bib}

\end{document}